\numberwithin{equation}{section}
\theoremstyle{plain}
\newtheorem{Th}{Theorem}[section]
\newtheorem{Lemma}[Th]{Lemma}
\newtheorem{Cor}[Th]{Corollary}
\newtheorem{Pro}[Th]{Proposition}
 \theoremstyle{definition}
\newtheorem{Def}[Th]{Definition}
\newtheorem{Rem}[Th]{Remark}
\newtheorem{?}[Th]{Problem}
\newcommand{\oI}{I}
\tikzstyle{n}=[circle,draw,thick,scale=0.5]
\tikzstyle{nt}=[circle,draw,thick,fill,scale=0.5]
\tikzstyle{nc}=[circle,draw,thick,fill]
\begin{document}

\title{On trees with real rooted independence polynomial}
\thanks{The author was partially supported by the MTA R\'enyi Institute Lend\"ulet Limits of Structures Research Group.}

\author{Ferenc Bencs} 
 \address{Central European University, Department of Mathematics
 \\ H-1051 Budapest
 \\ Zrinyi u. 14, Third Floor \\ Hungary \& Alfr\'ed R\'enyi Institute of Mathematics\\ H-1053 Budapest\\ Re\'altanoda u. 13-15.} 
 \email{ferenc.bencs@gmail.com}

 \subjclass[2000]{Primary: 05C31, Secondory: 05C69, 05C30}

 \keywords{independence polynomial, real rooted polynomial, tree, log-concave, stable-path tree}

\begin{abstract} 
The independence polynomial of a graph $G$ is \[I(G,x)=\sum\limits_{k\ge 0}i_k(G)x^k,\] where  $i_k(G)$ denotes the number of independent sets of $G$ of size $k$ (note that $i_0(G)=1$).  In this paper we show a new method to prove real-rootedness of the independence polynomials of certain families of trees. 

In particular we will give a new proof of the  real-rootedness of the independence polynomials of centipedes (Zhu's theorem),  caterpillars (Wang and Zhu's theorem), and we will prove a conjecture of Galvin and Hilyard about the real-rootedness of the independence polynomial of the so-called Fibonacci trees. 
\end{abstract}

\maketitle

\section{Introduction} 
The independence polynomial of a graph $G$ is
\small
\[
	I(G,x)=\sum\limits_{k\ge 0}i_k(G)x^k,
\]
where  $i_k(G)$ denotes the number of independent sets of $G$ of size $k$ (note that $i_0(G)=1$). In this paper we study the independence polynomials of trees. For trees, it is a well known conjecture that the sequence $(i_k(T))_{k\ge 0}$ is  unimodal.

Recall that a sequence $(b_k)_{k= 0}^{n}$ is unimodal (\cite{Stanley1989}), if there exists an index $k$, such that 
\[
  b_0\le b_1\le\dots\le b_{k-1}\le b_k\ge b_{k+1}\ge \dots \ge b_{n}.
\]
A stronger property for positive sequences is the so called log-concavity: for any $0< i< n$ we have $b_i^2\ge b_{i-1}b_{i+1}$. An even stronger property is the real-rootedness of the polynomial $p(x)=\sum_{i=0}^n b_ix^i$ (any complex zero of the polynomial is real). 
This prompted many mathematicians to study trees with real-rooted independence polynomials. In this paper we show a general method to construct such trees or prove real-rootedness. 

In particular we will give a new proof for  real-rootedness of the independence polynomials of certain  families of trees, which  includes centipedes (Zhu's theorem, see \cite{Zhu2007}),  caterpillars (Wang and Zhu's theorem, see \cite{Wang2011}), and we will prove a conjecture of Galvin and Hilyard about the real-rootedness of the independence polynomial of the Fibonacci trees (Conj. 6.1. of \cite{Galvin2017}). 
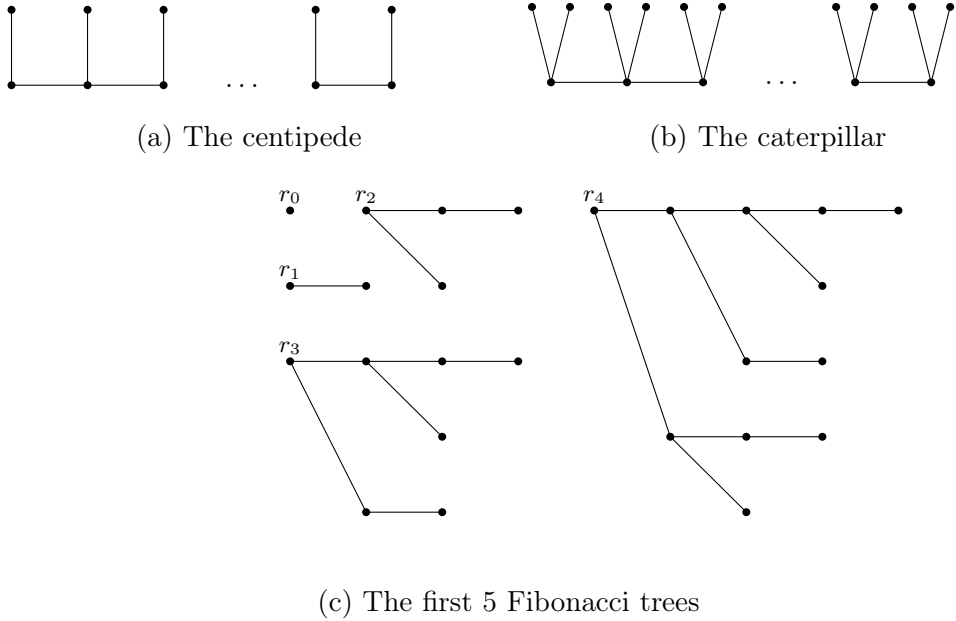
\begin{figure}
\begin{subfigure}[b]{0.5\linewidth}
      \begin{tikzpicture}[line cap=round,line join=round,>=triangle 45,x=1.0cm,y=1.0cm]
      \clip(0.71,0.73) rectangle (6.68,2.11);
      \draw (1,1)-- (1,2);
      \draw (1,1)-- (2,1);
      \draw (2,1)-- (3,1);
      \draw (2,1)-- (2,2);
      \draw (3,1)-- (3,2);
      \draw (5,1)-- (6,1);
      \draw (6,1)-- (6,2);
      \draw (5,1)-- (5,2);
      \draw (4,1) node {$$ \dots $$};
      \begin{scriptsize}
      \fill  (1,1) circle (1.5pt);
      \fill  (2,1) circle (1.5pt);
      \fill  (3,1) circle (1.5pt);
      \fill  (5,1) circle (1.5pt);
      \fill  (6,1) circle (1.5pt);
      \fill  (1,2) circle (1.5pt);
      \fill  (2,2) circle (1.5pt);
      \fill  (3,2) circle (1.5pt);
      \fill  (5,2) circle (1.5pt);
      \fill  (6,2) circle (1.5pt);
      \end{scriptsize}
      \end{tikzpicture}
      \caption{The centipede}
      \label{fig:centipede}
\end{subfigure}%
\begin{subfigure}[b]{0.5\linewidth}
      \begin{tikzpicture}[line cap=round,line join=round,>=triangle 45,x=1.0cm,y=1.0cm]
      \clip(0.44,0.69) rectangle (6.8,2.23);
      \draw (1,1)-- (0.75,2);
      \draw (1,1)-- (2,1);
      \draw (2,1)-- (3,1);
      \draw (2,1)-- (1.75,2);
      \draw (3,1)-- (2.75,2);
      \draw (5,1)-- (6,1);
      \draw (6,1)-- (5.75,2);
      \draw (5,1)-- (4.75,2);
      \draw (4,1) node {$$ \dots $$};
      \draw (1,1)-- (1.25,2);
      \draw (2,1)-- (2.25,2);
      \draw (3,1)-- (3.25,2);
      \draw (5,1)-- (5.25,2);
      \draw (6,1)-- (6.25,2);
      \begin{scriptsize}
      \fill  (1,1) circle (1.5pt);
      \fill  (2,1) circle (1.5pt);
      \fill  (3,1) circle (1.5pt);
      \fill  (5,1) circle (1.5pt);
      \fill  (6,1) circle (1.5pt);
      \fill  (0.75,2) circle (1.5pt);
      \fill  (1.75,2) circle (1.5pt);
      \fill  (2.75,2) circle (1.5pt);
      \fill  (4.75,2) circle (1.5pt);
      \fill  (5.75,2) circle (1.5pt);
      \fill  (1.25,2) circle (1.5pt);
      \fill  (2.25,2) circle (1.5pt);
      \fill  (3.25,2) circle (1.5pt);
      \fill  (5.25,2) circle (1.5pt);
      \fill  (6.25,2) circle (1.5pt);
      \end{scriptsize}
      \end{tikzpicture}
      \caption{The caterpillar}
      \label{fig:caterpillar}
\end{subfigure}

\begin{subfigure}[b]{0.5\linewidth}
      \begin{tikzpicture}[line cap=round,line join=round,>=triangle 45,x=1.0cm,y=1.0cm]
      \clip(-1.54,-1.78) rectangle (7.95,3.69);
      \draw (5,3)-- (6,3);
      \draw (6,3)-- (7,3);
      \draw (5,3)-- (6,2);
      \draw (5,1)-- (6,1);
      \draw (4,3)-- (5,3);
      \draw (4,3)-- (5,1);
      \draw (4,0)-- (5,0);
      \draw (5,0)-- (6,0);
      \draw (4,0)-- (5,-1);
      \draw (3,3)-- (4,3);
      \draw (3,3)-- (4,0);
      \draw (0,1)-- (1,1);
      \draw (1,1)-- (2,1);
      \draw (0,1)-- (1,0);
      \draw (0,-1)-- (1,-1);
      \draw (-1,1)-- (0,1);
      \draw (-1,1)-- (0,-1);
      \draw (0,3)-- (1,3);
      \draw (1,3)-- (2,3);
      \draw (0,3)-- (1,2);
      \draw (-1,2)-- (0,2);
      \begin{scriptsize}
      \fill  (6,3) circle (1.5pt);
      \fill  (7,3) circle (1.5pt);
      \fill  (6,2) circle (1.5pt);
      \fill  (5,3) circle (1.5pt);
      \fill  (5,1) circle (1.5pt);
      \fill  (6,1) circle (1.5pt);
      \fill  (4,3) circle (1.5pt);
      \fill  (5,0) circle (1.5pt);
      \fill  (6,0) circle (1.5pt);
      \fill  (5,-1) circle (1.5pt);
      \fill  (4,0) circle (1.5pt);
      \fill  (3,3) circle (1.5pt);
      \draw (3,3.18) node {$r_4$};
      \fill  (1,1) circle (1.5pt);
      \fill  (2,1) circle (1.5pt);
      \fill  (1,0) circle (1.5pt);
      \fill  (0,1) circle (1.5pt);
      \fill  (0,-1) circle (1.5pt);
      \fill  (1,-1) circle (1.5pt);
      \fill  (-1,1) circle (1.5pt);
      \draw (-1,1.18) node {$r_3$};
      \fill  (1,3) circle (1.5pt);
      \fill  (2,3) circle (1.5pt);
      \fill  (1,2) circle (1.5pt);
      \fill  (0,3) circle (1.5pt);
      \draw (0,3.18) node {$r_2$};
      \fill  (-1,2) circle (1.5pt);
      \draw (-1,2.18) node {$r_1$};
      \fill  (0,2) circle (1.5pt);
      \fill  (-1,3) circle (1.5pt);
      \draw (-1,3.18) node {$r_0$};
      \end{scriptsize}
      \end{tikzpicture}
      \caption{The first 5 Fibonacci trees}
      \label{fig:fibonacci}
\end{subfigure}
\caption{Some families of trees}
\end{figure}

Recall that the \textsl{$n$-centipede $W_n$} is a graph (Fig.~\ref{fig:centipede}), such that we take a path on $n$ vertices and we hang 1 pendant edge from each vertex of it.
  Similarly the \textsl{$n$-caterpillar} $H_n$ is the graph (Fig.~\ref{fig:caterpillar}) obtained by taking a path on $n$ vertices and by hanging 2 pendant edges from each vertex of it. The \textsl{Fibonacci trees} were defined by Wagner \cite{Wagner2007} as follows (Fig.~\ref{fig:fibonacci}): let $F_0=K_1$ and $F_1=K_2$ with roots $r_0\in V(F_0)$ and $r_1\in V(F_1)$. Then for $n\ge 2$ the $n$th Fibonacci tree $F_n$ is obtained from  the disjoint union of $F_{n-1}$, $F_{n-2}$ and a new vertex, labeled by $r_n$ and connecting $r_n$ to the roots of $F_{n-1}$ and $F_{n-2}$. Define $r_n$ as the root of $F_n$.

\subsection{Methods and motivations}

To motivate our method we will use certain results from the theory of matching polynomials. Recall that the matching polynomial of a graph $G$ is defined as:
\[
	\mu(G,x)=\sum\limits_{k=0}(-1)^km_k(G)x^{n-2k},
\]
where $m_k(G)$ is the number of matchings with $k$ edges (note that $m_0(G)=1$).
One of the best known theorems about matching polynomials is that for any finite graph $G$ and $u\in V(G)$ there exists a rooted tree $(T,r)$, such that
\begin{eqnarray}
  \frac{\mu(G-u,x)}{\mu(G,x)}=\frac{\mu(T-r,x)}{\mu(T,x)} 
\end{eqnarray}
A well-known construction for $T$ is the path-tree \cite{Godsil1993} (a.k.a. Godsil tree), which is the tree on paths of $G$ starting from $u$, and the edges are the strict inclusions. (For an example see Figure~\ref{fig:pt}.)
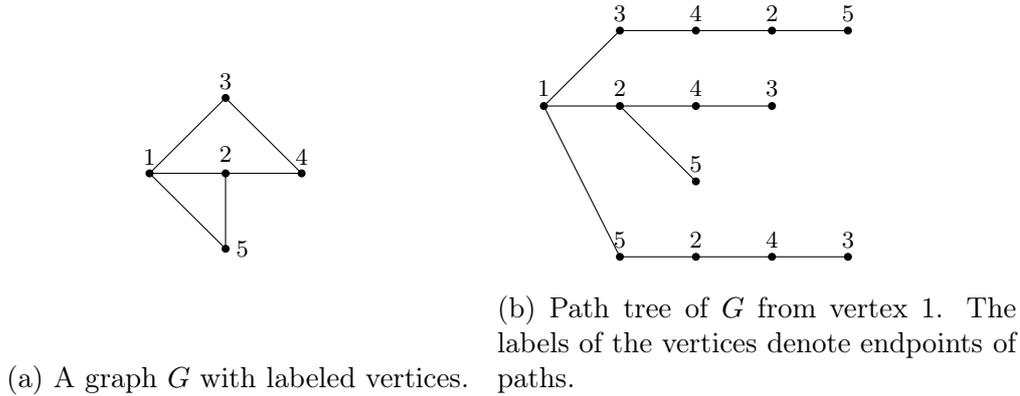
\begin{figure}
\centering
  \begin{subfigure}[b]{0.5\linewidth}
    \centering
    \begin{tikzpicture}[line cap=round,line join=round,>=triangle 45,x=1.0cm,y=1.0cm]
      \clip(0.72,0.7) rectangle (3.48,3.64);
      \draw (1,2)-- (2,2);
      \draw (1,2)-- (2,1);
      \draw (2,2)-- (2,1);
      \draw (1,2)-- (2,3);
      \draw (2,2)-- (3,2);
      \draw (2,3)-- (3,2);
      \begin{scriptsize}
      \fill  (1,2) circle (1.5pt);
      \draw (1,2.22) node {$1$};
      \fill  (2,2) circle (1.5pt);
      \draw (2,2.26) node {$2$};
      \fill  (2,3) circle (1.5pt);
      \draw (2,3.22) node {$3$};
      \fill  (2,1) circle (1.5pt);
      \draw (2.22,1) node {$5$};
      \fill  (3,2) circle (1.5pt);
      \draw (3,2.22) node {$4$};
      \end{scriptsize}
    \end{tikzpicture}
    \vspace{1cm}
    \caption{A graph $G$ with labeled vertices.} 
  \end{subfigure}%
  \begin{subfigure}[b]{0.5\linewidth}
    \begin{tikzpicture}[line cap=round,line join=round,>=triangle 45,x=1.0cm,y=1.0cm]
    \clip(0.39,-0.28) rectangle (6.31,3.61);
    \draw (1,2)-- (2,3);
    \draw (1,2)-- (2,2);
    \draw (1,2)-- (2,0);
    \draw (2,3)-- (3,3);
    \draw (3,3)-- (4,3);
    \draw (4,3)-- (5,3);
    \draw (2,2)-- (3,2);
    \draw (3,2)-- (4,2);
    \draw (2,2)-- (3,1);
    \draw (2,0)-- (3,0);
    \draw (3,0)-- (4,0);
    \draw (4,0)-- (5,0);
    \begin{scriptsize}
    \fill  (1,2) circle (1.5pt);
    \draw (1,2.23) node {$1$};
    \fill  (2,3) circle (1.5pt);
    \draw (2,3.23) node {$3$};
    \fill  (2,2) circle (1.5pt);
    \draw (2,2.23) node {$2$};
    \fill  (2,0) circle (1.5pt);
    \draw (2,0.23) node {$5$};
    \fill  (3,1) circle (1.5pt);
    \draw (3,1.23) node {$5$};
    \fill  (3,2) circle (1.5pt);
    \draw (3,2.23) node {$4$};
    \fill  (4,2) circle (1.5pt);
    \draw (4,2.23) node {$3$};
    \fill  (3,3) circle (1.5pt);
    \draw (3,3.23) node {$4$};
    \fill  (4,3) circle (1.5pt);
    \draw (4,3.23) node {$2$};
    \fill  (5,3) circle (1.5pt);
    \draw (5,3.23) node {$5$};
    \fill  (3,0) circle (1.5pt);
    \draw (3,0.23) node {$2$};
    \fill  (4,0) circle (1.5pt);
    \draw (4,0.23) node {$4$};
    \fill  (5,0) circle (1.5pt);
    \draw (5,0.23) node {$3$};
    \end{scriptsize}
    \end{tikzpicture}
    \caption{Path tree of $G$ from vertex 1. The labels of the vertices denote endpoints of paths.}
  \end{subfigure}
  \caption{A graph with its path tree.}
  \label{fig:pt}
\end{figure}

In this paper we will prove an ''independence version'' of this theorem through a quite similar construction. More precisely, we will show that  there exists a rooted tree $(T',r)$, such that
\[
  \frac{\oI(G-u,x)}{\oI(G,x)}=\frac{\oI(T'-r,x)}{\oI(T',x)}.
\]
We will call the constructed tree a stable-path tree. This construction already appeared in in the work of Scott and Sokal (see \cite{Scott2005}) and variant of this construction in the work of Weitz (see \cite{Weitz2006}).
We will see that the key property of a stable-path tree is that its independence polynomial is a product of independence polynomials of some induced subgraphs of $G$. 

Chudnovsky and Seymour showed that the independence polynomial of any claw-free graph is real-rooted (see \cite{sey}). Since any induced subgraph of a claw-free graph is also claw-free, this enables us to conclude that any stable-path tree of a claw-free graph has real-rooted independence polynomial. In section ~\ref{sec:ap}  we will construct claw-free graphs such that their stable-path trees will be $n$-centipedes, $n$-caterpillars and Fibonacci trees. In the same section we will give further applications of this method.

\subsection{Notation}
We denote the vertex set and edge set of a graph $G$ by $V(G)$ and $E(G)$, respectively.
Let $N_G(u)$ denote the set of neighbours of the vertex $u$ and $d(u)$ the degree of the vertex $u$.
Let $N_G[u]=N_G(u)\cup\{u\}$ denote the closed neighbourhood of the vertex $u$. If it is clear from the context, then we will write $N(u)$ and $N[u]$ instead of $N_G(u)$ and $N_G[u]$.
Let $G-v$ denote the graph obtained from $G$ by deleting the vertex $v$.
If $S\subseteq V(G)$, then $G[S]$ denotes the induced subgraph of $G$ on the vertex set $S$, and $G-S$ denotes $G[V(G)-S]$.

\subsection{This paper is organized as follows:} in the next section we will define  stable-path trees of graphs, and we will prove some properties of it. In the last section we will prove real-rootedness of independence polynomials of certain graphs.

\medskip
\section{Tree of stable paths}\label{sec:main}

In this section we will give two variants of the definition of the stable-path tree, where the first one is a special case of the latter one. For the applications it is enough to get familiar with the first definition.
But first let us recall the following properties of the independence polynomial, which we will use intensively in the proofs. For proof see \cite{Levit2005}.
\begin{Lemma}
 Let $G$ be a graph with connected components $G_1,\dots,G_k$, and let $u\in V(G)$ be a fixed vertex. Then
 \begin{gather*}
  I(G,x)=I(G-u,x)+xI(G-N_G[u],x)\\
  I(G,x)=\prod_{i=1}^kI(G_i,x)
 \end{gather*}
\end{Lemma}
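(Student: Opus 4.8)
The plan is to establish both identities by directly analyzing the coefficients $i_k(G)$, since each is really a statement about how the independent sets of $G$ decompose. For the first identity I would partition the independent sets of $G$ of size $k$ according to whether they contain the fixed vertex $u$. An independent set avoiding $u$ is precisely an independent set of size $k$ in $G-u$. An independent set $S$ containing $u$ cannot contain any neighbour of $u$, so $S\setminus\{u\}$ is an independent set of size $k-1$ in $G-N_G[u]$; conversely, adjoining $u$ to any independent set of $G-N_G[u]$ of size $k-1$ yields an independent set of $G$ of size $k$ containing $u$. Both correspondences are bijections, so
\[
 i_k(G)=i_k(G-u)+i_{k-1}(G-N_G[u])
\]
for every $k\ge 0$ (with the convention $i_{-1}=0$). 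Multiplying by $x^k$ and summing over $k$, the first sum reproduces $I(G-u,x)$, while the second becomes $x\,I(G-N_G[u],x)$ after the index shift, giving the claimed polynomial identity.

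For the second identity I would use that there are no edges between distinct components $G_1,\dots,G_k$. Hence a set $S\subseteq V(G)$ is independent in $G$ if and only if each restriction $S\cap V(G_i)$ is independent in $G_i$. This sets up a bijection between the independent sets of $G$ and the tuples $(S_1,\dots,S_k)$ with each $S_i$ independent in $G_i$, under which $|S|=\sum_{i=1}^k|S_i|$. Counting by size yields
\[
 i_m(G)=\sum_{m_1+\dots+m_k=m}\ \prod_{i=1}^k i_{m_i}(G_i),
\]
which is exactly the coefficient of $x^m$ in $\prod_{i=1}^k I(G_i,x)$. Equivalently, one may induct on the number of components, the inductive step being the observation that the coefficients of a product of two independence polynomials are given by precisely this Cauchy convolution.

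Both arguments are elementary, and I expect no genuine obstacle. The only care needed is to check that the two correspondences in the first part are genuine bijections, and that the size bookkeeping—the index shift that produces the factor $x$ in the first identity, and the additivity $|S|=\sum_i|S_i|$ in the second—is faithfully translated into the generating-function statement.
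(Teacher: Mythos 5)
Your proof is correct and complete. Note that the paper does not actually prove this lemma itself --- it simply cites Levit and Mandrescu for it --- so there is no in-paper argument to compare against; your coefficient-level treatment (partitioning independent sets of size $k$ by whether they contain $u$, giving $i_k(G)=i_k(G-u)+i_{k-1}(G-N_G[u])$, and the Cauchy convolution $i_m(G)=\sum_{m_1+\dots+m_k=m}\prod_i i_{m_i}(G_i)$ for the component factorization) is exactly the standard proof such a reference supplies, with the bijections and size bookkeeping handled correctly.
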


\begin{Def}[Tree of stable paths]
 Let $G$ be a graph, where we have a total ordering $\prec$ on $V(G)$ and let  $u\in V(G)$ fixed. Then we define a tree $(T^<_{G,u},\bar{u})$ as follows. Let us denote by $N(u)=\{u_1\prec \dots \prec u_d\}$, and let 
 \begin{gather*}
  G^i=G[V(G)\setminus\{u,u_1,v_2,\dots,u_{i-1}\}]\\
  (T^i,r^i)=(T^<_{G^i,u_i},\bar{u_i}),
  \end{gather*}
  where we take the induced ordering of the vertices on $V(G^i)$ for $1\le i\le d$. Consider the disjoint unions of $T^i$ with roots $r^i$ and a new vertex with label $\bar{u}$, and add edges $(\bar{u},r^i)$ for $1\le i\le d$. In this way we gain a tree $T^<_{G,u}$ and let $\bar{u}$ be the root of this tree. See an example in Fig~\ref{fig:spt}.
\end{Def}
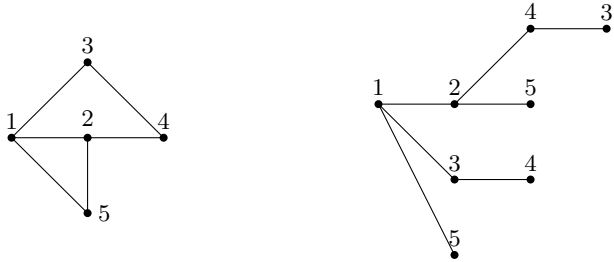
\begin{figure}[h!]
\centering
\begin{subfigure}[b]{0.5\linewidth}
\centering
\begin{tikzpicture}[line cap=round,line join=round,>=triangle 45,x=1.0cm,y=1.0cm]
\clip(0.72,0.7) rectangle (3.48,3.64);
\draw (1,2)-- (2,2);
\draw (1,2)-- (2,1);
\draw (2,2)-- (2,1);
\draw (1,2)-- (2,3);
\draw (2,2)-- (3,2);
\draw (2,3)-- (3,2);
\begin{scriptsize}
\fill  (1,2) circle (1.5pt);
\draw (1,2.22) node {$1$};
\fill  (2,2) circle (1.5pt);
\draw (2,2.26) node {$2$};
\fill  (2,3) circle (1.5pt);
\draw (2,3.22) node {$3$};
\fill  (2,1) circle (1.5pt);
\draw (2.22,1) node {$5$};
\fill  (3,2) circle (1.5pt);
\draw (3,2.22) node {$4$};
\end{scriptsize}
\end{tikzpicture}
\vspace{1cm}
 \caption{A graph $G$ with labeled vertices.} 
\end{subfigure}%
\begin{subfigure}[b]{0.5\linewidth}
\begin{tikzpicture}[line cap=round,line join=round,>=triangle 45,x=1.0cm,y=1.0cm]
\clip(0.75,0.73) rectangle (4.48,4.6);
\draw (1,3)-- (2,3);
\draw (1,3)-- (2,2);
\draw (1,3)-- (2,1);
\draw (2,2)-- (3,2);
\draw (2,3)-- (3,4);
\draw (3,4)-- (4,4);
\draw (2,3)-- (3,3);
\begin{scriptsize}
\fill  (1,3) circle (1.5pt);
\draw (1,3.22) node {$1$};
\fill  (2,3) circle (1.5pt);
\draw (2,3.22) node {$2$};
\fill  (2,2) circle (1.5pt);
\draw (2,2.22) node {$3$};
\fill  (2,1) circle (1.5pt);
\draw (2,1.22) node {$5$};
\fill  (3,4) circle (1.5pt);
\draw (3,4.23) node {$4$};
\fill  (4,4) circle (1.5pt);
\draw (4,4.22) node {$3$};
\fill  (3,3) circle (1.5pt);
\draw (3,3.22) node {$5$};
\fill  (3,2) circle (1.5pt);
\draw (3,2.22) node {$4$};
\end{scriptsize}
\end{tikzpicture}

 \caption{The graph $T^<_{G,1}$. The labels of the vertices denote endpoints of stable-paths.}
\end{subfigure}

 \caption{A graph with its stable-path tree. The ordering on the vertices of $G$ is induced by its labeling}
 \label{fig:spt}
\end{figure}

\begin{Th}\label{th:fa}
 Let $G$ be a graph, $u\in V(G)$. Then for $T=T^<_{G,u}$ we have that
 \[
  \frac{\oI(G-u,x)}{\oI(G,x)}=\frac{\oI(T-\overline{u},x)}{\oI(T,x)},
 \]
\end{Th}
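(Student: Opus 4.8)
The plan is to argue by induction on $|V(G)|$, showing that the two ratios satisfy one and the same recursion. For a graph $H$ and a vertex $w\in V(H)$ write $R(H,w)=I(H-w,x)/I(H,x)$; since every independence polynomial has constant term $1$ it is a nonzero polynomial, so these are well-defined nonzero rational functions and I may divide freely.

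First I would extract the recursion on the graph side. The first identity of the Lemma gives $I(G,x)=I(G-u,x)+xI(G-N[u],x)$, and dividing by $I(G-u,x)$ yields $1/R(G,u)=1+x\,I(G-N[u],x)/I(G-u,x)$. Writing $N(u)=\{u_1\prec\cdots\prec u_d\}$ and $G^i=G[V(G)\setminus\{u,u_1,\dots,u_{i-1}\}]$ as in the Definition, I note $G^1=G-u$, $G^{d+1}=G-N[u]$, and $G^{i+1}=G^i-u_i$. Hence the quotient telescopes:
\[
 \frac{I(G-N[u],x)}{I(G-u,x)}=\prod_{i=1}^{d}\frac{I(G^{i+1},x)}{I(G^i,x)}=\prod_{i=1}^{d}\frac{I(G^i-u_i,x)}{I(G^i,x)}=\prod_{i=1}^{d}R(G^i,u_i),
\]
so that $1/R(G,u)=1+x\prod_{i=1}^{d}R(G^i,u_i)$.

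Next I would treat the tree side. In $T=T^<_{G,u}$ the root $\bar u$ is adjacent exactly to the roots $r^1,\dots,r^d$ of the disjoint subtrees $T^i=T^<_{G^i,u_i}$, so $\bar u$ is a cut vertex and $T-\bar u=\bigsqcup_i T^i$ while $T-N[\bar u]=\bigsqcup_i(T^i-r^i)$. Applying the first Lemma identity to $T$, and then the product-over-components identity to $I(T-\bar u,x)$ and to $I(T-N[\bar u],x)$, gives $1/R(T,\bar u)=1+x\prod_{i=1}^{d}R(T^i,r^i)$, exactly the same shape of recursion.

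Finally I would close the induction. Each $G^i$ has strictly fewer vertices than $G$ (it loses at least $u$), so the inductive hypothesis applies to the pair $(G^i,u_i)$ and its stable-path tree $T^i$, giving $R(G^i,u_i)=R(T^i,r^i)$. Substituting this into the two displayed recursions forces $R(G,u)=R(T,\bar u)$, which is the claim; the base case $d=0$ (an isolated $u$, whose stable-path tree is a single vertex) is the empty-product instance $1/R=1+x$ and holds trivially. I expect no deep obstacle here: the only points that need care are checking that the telescoping indices match the Definition exactly — that $G^{i+1}=G^i-u_i$ and that the induced ordering makes $T^i$ genuinely the stable-path tree of $(G^i,u_i)$, so the inductive hypothesis is legitimately applicable — and confirming that deleting $\bar u$ splits $T$ into precisely the components $T^1,\dots,T^d$, so that both structural identities of the Lemma apply cleanly.
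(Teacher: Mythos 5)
Your proposal is correct and is essentially the paper's own proof: induction on $|V(G)|$, the telescoping identity $I(G-N[u],x)/I(G-u,x)=\prod_{i=1}^{d}I(G^i-u_i,x)/I(G^i,x)$ on the graph side, the inductive hypothesis applied to each $(G^i,u_i)$, and the matching recursion for $T$ obtained by deleting the cut vertex $\bar u$. The only differences are cosmetic: you make explicit the cancellation $G^{i+1}=G^i-u_i$, the component decompositions $T-\bar u=\bigsqcup_i T^i$ and $T-N[\bar u]=\bigsqcup_i(T^i-r^i)$, and the $d=0$ base case, all of which the paper leaves implicit.
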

\begin{proof}
 We will prove the statement by induction on the number of vertices of $G$. If $G$ has exactly one vertex, then $T^<_{G,u}$ is constructed to be a graph with one vertex. 
 
 Let $N(u)=\{u_1\prec \dots \prec u_d\}$, and then let $G^i=G[V(G)\setminus\{u,u_1,v_2,\dots,u_{i-1}\}]$ and $(T^i,r^i)=(T_{G^i,u_i},\bar{u_i})$ for $1\le i \le d$ as in the definition. Then 
 \begin{gather*}
  \frac{\oI(G,x)}{\oI(G-u,x)}=\frac{\oI(G-u,x)+x\oI(G-N[u],x)}{\oI(G-u,x)}=1+\frac{x\oI(G-N[u],x)}{\oI(G-u,x)}=\\
  =1+x\frac{\oI(G-u-u_1,x)\oI(G-u-\{u_1,u_2\},x)\dots \oI(G-u-\{u_1,\dots,u_k\},x)}{\oI(G-u,x)\oI(G-u-u_1)\dots \oI(G-u-\{u_1,\dots,u_{k-1}\})}=\\
  =1+x\frac{\oI(G^1-
  u_1,x)}{\oI(G^1,x)}\frac{\oI(G^2-u_2,x)}{\oI(G^2,x)}\dots\frac{\oI(G^d-u_d,x)}{\oI(G^d,x)}=\\
  =1+x\frac{\oI(T^1-r^1,x)}{\oI(T^1,x)}\frac{\oI(T^2-r^2,x)}{\oI(T^2,x)}\dots\frac{\oI(T^d-r^d,x)}{\oI(T^d,x)}=\\  
  =\frac{\oI(T-r,x)+x\oI(T-N[r],x)}{\oI(T-r,x)}=\frac{\oI(T,x)}{\oI(T-r,x)}.
  \end{gather*}
\end{proof}

We would like to remark that in all applications it will be enough to use this definition, however, for the completeness we will give a a more general form.

The following construction already appeared in the work of Scott and Sokal (see \cite{Scott2005}), where they called the this tree as pruned SAW-tree. 

  \begin{Def}[Tree of $\sigma$-stable paths]
  Let  $\mathcal{P}_u$ be the set of paths from $u$ in  $G$, and let \[A_{G,u}=\{(P,e)\in \mathcal{P}_u\times E(G)~|~P=(v_0,\dots,v_k), v_k\in e\}.\] A function $\sigma:A_{G,u}\to \mathbb{R}$ is called deep decision if it satisfies that whenever $(P,e),(P,f)\in A_{G,u}$ and $\sigma(P,e)=\sigma(P,f)$, then $e=f$. Then a path $P=(v_0,v_1, \dots, v_k)$ from $u$ is $\sigma$-stable, if whenever $(v_i,v_j)\in E(G)$ and $i+1<j$, then $\sigma(P',(v_i,v_{i+1}))<\sigma(P',(v_i,v_j))$, where $P'=(v_0,\dots, v_i)$ is a subpath. 
 If the path $P=(u,v_1,\dots,v_k)$ is stable with respect to $\sigma$, then $P'=(u,v_1,\dots,v_{k-1})$ is also stable with respect to $\sigma$.
 
 Let $T^\sigma_{G,u}$ be a tree, whose vertices are the $\sigma$-stable paths from $u$, and the edges correspond to the strict inclusion. In that tree the path $(u)$ (with length 0) appears, which we will denote by $\overline{u}$.
\end{Def}

To see the relation between the two definitions, let us assume, that $G$ has a total ordering on its vertices, so we may assume, that $(V(G),\prec)=(\{1,\dots,n\},<)$. Then for a $(P,e)\in A_{G,u}$, such that $P=(u,v_1,\dots,v_k)$ and $e=
(v_k,v_{k+1})$ let $\sigma(P,e)=v_{k+1}$. Then it is easy to check that $T^\sigma_{G,u}=T^<_{G,u}$. Indeed the second definition is a generalization of the first one.

For the completeness we will prove Theorem~\ref{th:fa} also for the generalized $\sigma$-stable-path tree.
\begin{Th}\label{th:fa_gen}
 Let $G$ be a graph, $u\in V(G)$ and let $\sigma:A_{G,u}\to \mathbb{R}$ be a deep decision. Then for $T=T^{\sigma}_{G,u}$ we have that
 \[
  \frac{\oI(G-u,x)}{\oI(G,x)}=\frac{\oI(T-\overline{u},x)}{\oI(T,x)},
 \]
\end{Th}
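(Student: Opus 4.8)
The plan is to mimic the proof of Theorem~\ref{th:fa} essentially verbatim, using induction on $|V(G)|$, with the only new ingredient being a careful unwinding of the definition of $\sigma$-stable paths in place of the simpler recursion on $G^i = G[V(G)\setminus\{u,u_1,\dots,u_{i-1}\}]$. The base case (one vertex) is immediate since $T^\sigma_{G,u}$ is then a single vertex $\overline{u}$, giving both sides equal to $1/1$ after clearing. For the inductive step, I would again start from the recursion $I(G,x)=I(G-u,x)+xI(G-N[u],x)$, so that
\[
\frac{I(G,x)}{I(G-u,x)} = 1 + x\,\frac{I(G-N[u],x)}{I(G-u,x)}.
\]
The task is to show this equals $I(T,x)/I(T-\overline{u},x)$, which by the same recursion applied at the root $\overline{u}$ of $T$ (whose children are the roots $r^i$ of the subtrees hanging off the neighbours of $u$) equals $1 + x\prod_i I(T^i-r^i,x)/I(T^i,x)$.

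The key step is to produce a telescoping factorization of $I(G-N[u],x)/I(G-u,x)$ indexed by $N(u)=\{u_1,\dots,u_d\}$, so that each factor matches a subtree ratio via the induction hypothesis. First I would order $N(u)$ and write
\[
\frac{I(G-N[u],x)}{I(G-u,x)} = \prod_{i=1}^{d}\frac{I\big((G-u)-\{u_1,\dots,u_i\},x\big)}{I\big((G-u)-\{u_1,\dots,u_{i-1}\},x\big)},
\]
which telescopes correctly since the numerator of the last factor is $I(G-N[u],x)$ and the denominator of the first is $I(G-u,x)$. The heart of the argument is then to identify the $i$-th factor with $I(H^i-u_i,x)/I(H^i,x)$, where $H^i$ is the graph on the $\sigma$-stable paths from $u_i$ that remain stable once we have prepended $u$ and passed through $u_1,\dots,u_{i-1}$; by the induction hypothesis (applied to $H^i$ with root $u_i$ and the induced deep decision $\sigma^i$) this equals $I(T^i-r^i,x)/I(T^i,x)$, where $T^i=T^{\sigma^i}_{H^i,u_i}$ is exactly the subtree of $T$ rooted at $r^i$. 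Assembling the factors and reversing the recursion at $\overline{u}$ then closes the induction.

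The main obstacle I expect is the combinatorial bookkeeping that makes the factorization legitimate, namely verifying that the subgraph $H^i$ and its induced deep decision $\sigma^i$ are the correct objects so that $T^i$ really is the subtree of $T^\sigma_{G,u}$ hanging off $r^i$. Concretely, I must check that a path $(u_i,w_1,\dots,w_m)$ appears as a $\sigma^i$-stable path in $H^i$ precisely when $(u,u_i,w_1,\dots,w_m)$ is a $\sigma$-stable path in $G$, and that the stability condition $\sigma(P',(v_i,v_{i+1}))<\sigma(P',(v_i,v_j))$ along the extended path correctly encodes the deletion of the already-visited neighbours $u_1,\dots,u_{i-1}$ when we move to the next factor. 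The deep-decision hypothesis is exactly what guarantees that $\sigma$ induces a well-defined total comparison at each branching, so that the subtrees $T^i$ are genuinely disjoint and exhaust the non-root vertices of $T$; once this identification is set up, the algebra is identical to the proof of Theorem~\ref{th:fa}.
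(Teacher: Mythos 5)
Your proposal is correct and follows essentially the same route as the paper's own proof: induction on $|V(G)|$, the recursion $\oI(G,x)=\oI(G-u,x)+x\oI(G-N[u],x)$, the telescoping factorization of $\oI(G-N[u],x)/\oI(G-u,x)$ over $N(u)=\{u_1,\dots,u_d\}$, the induced deep decisions $\sigma_i$ on $G-\{u,u_1,\dots,u_{i-1}\}$, and the bijection between $\sigma_i$-stable paths from $u_i$ and $\sigma$-stable paths beginning $(u,u_i,\dots)$. The only detail the paper makes explicit that you leave implicit is that $N(u)$ must be ordered by the values $\sigma(\overline{u},(u,u_i))$ --- precisely so that stability at the root forces a path through $u_i$ to avoid $u_1,\dots,u_{i-1}$, which is the check you correctly single out as the heart of the matter.
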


\begin{proof}
  We will prove the statement by induction on the number of vertices. If $G$ has exactly one vertex, then $T^\sigma(G,u)$ is constructed to be a graph with one vertex.
  
  Furthermore we may assume that $G$ is connected, since if $G_1, \dots, G_k$ are the connected components of $G$, where $u\in V(G_1)$, then by using the multiplicity of the independence polynomial, we have: 
 \[
  \frac{\oI(G-u,x)}{\oI(G,x)}=\frac{\oI(G_1-u,x)\oI(G_2,x)\dots \oI(G_k,x)}{\oI(G_1,x)\oI(G_2,x)\dots \oI(G_k,x)}=\frac{\oI(G_1-u,x)}{\oI(G_1,x)}.
 \] and by $A_{G,u}=A_{G_1,u}$ we have that $T^\sigma(G_1,u)=T^\sigma(G,u)$, which is the appropriate tree.
 
 For the rest assume that $G$ is connected. Then let $N(u)=\{u_1,\dots,u_d\}$ in such a way, such that $\sigma(\overline{u},(u,u_i))<\sigma(\overline{u},(u,u_j))$, whenever $1\le i<j\le d$. Then for any $1\le i\le d$ and for any $(P,e)\in A_{G-\{u,u_1,\dots,u_{i-1}\},u_i}$ let $\sigma_i$ be defined as follows (where $P=(u_i,v_1,\dots,v_k)$):
 \[
  \sigma_i(P,e)=\sigma((u,u_i,v_1,\dots v_k), e).
 \]
 Then
 \begin{gather*}
  \frac{\oI(G,x)}{\oI(G-u,x)}=\frac{\oI(G-u,x)+x\oI(G-N[u],x)}{\oI(G-u,x)}=1+\frac{x\oI(G-N[u],x)}{\oI(G-u,x)}=\\
  =1+x\frac{\oI(G-u-u_1,x)\oI(G-u-\{u_1,u_2\},x)\dots \oI(G-u-\{u_1,\dots,u_d\},x)}{\oI(G-u,x)\oI(G-u-u_1)\dots \oI(G-u-\{u_1,\dots,u_{d-1}\})}=\\
  =1+x\frac{\oI(G-u-
  u_1,x)}{\oI(G-u,x)}\frac{\oI(G-u-\{u_1,u_2\},x)}{\oI(G-u-u_1)}\dots\frac{\oI(G-u-\{u_1,\dots,u_d\},x)}{\oI(G-u-\{u_1,\dots,u_{d-1}\})}=\\
  =1+x\frac{\oI(T^{\sigma_1}_{G-u,u_1}-\overline{u_1},x)}{\oI(T^{\sigma_1}_{G-u,u_1},x)}\frac{\oI(T^{\sigma_2}_{G-u-u_1,u_2}-\overline{u_2},x)}{\oI(T^{\sigma_2}_{G-u-u_1,u_2},x)}\dots\frac{\oI(T^{\sigma_d}_{G-u-\{u_1\dots u_{d-1}\},u_d}-\overline{u_d},x)}{\oI(T^{\sigma_d}_{G-u-\{u_1\dots u_{d-1}\},d_k},x)}=\\
  =\frac{\oI(T,x)}{\oI(T-r,x)},
 \end{gather*}
 where $T$ is a tree that is obtained from a star with $k$ leaves, whose root is $r$, and the $i$th leaf is glued to the root of $T^{\sigma_i}_{G-u-\{u_1\dots u_{i-1}\},u_i}$. On the other hand this $T$ is isomorphic to  $T^\sigma_{G,u}$, since  any $\sigma$-stable path $P=(u,u_i,v_1,\dots,v_k)$   (specially if $1\le j<i$, then $u_j\notin\{v_1,\dots,v_k\}$) the path $P'=(u_i,v_1,\dots,v_k)$ is $\sigma_i$-stable. And for any $\sigma_i$-stable path $P'=(u_i,v_1,\dots,v_k)$  is a $P=(u,u_i,v_1,\dots,v_k)$ $\sigma$-stable path. So
 \[\frac{\oI(T-r,x)}{\oI(T,x)}=\frac{\oI(T^{\sigma}_{G,u}-\overline{u},x)}{\oI(T^{\sigma}_{G,u},x)}\]
\end{proof}

We would like to remark that Weitz's construction of the self-avoiding path tree is a special case of the previously defined stable-path tree of a deep decision. Let $\phi:E(G)\to \{1,\dots,m\}$ bijection, where $m=|E(G)|$. Then for a $(P,e)\in A_{G,u}$ let $\sigma(P,e)=\phi(e)$. Then $T^\sigma_{G,u}$ is the Weitz-tree.

\begin{Rem}
 Observe that if we have a deep decision for a connected graph, then we can perform the DFS-algorithm with respect to $\sigma$, in the following way. Whenever we arrive into the vertex $v$ along the path $P$ and there is an unvisited neighbor of $v$, then we will move to that unvisited vertex $w$ for which $\sigma(P,(v,w))$ is the smallest. 
 
 Formally, let us assume, that there is a given connected graph $G$, $u\in V(G)$ and a $\sigma$ deep decision from $u$. Then one can construct a spanning tree $F_{G,u,\sigma}$ (call as $\sigma$-DFS tree of $G$) as follows. Let $G_1,\dots,G_k$ be a the connected components of $G-u$, $u_i=\textrm{argmin}_{v\in V(G_i)\cap N_G(u)}(\sigma(u,(u,v)))$ for $1\le i\le k$ and the functions $\sigma_i:A_{G_i,u_i}\to \mathbb{R}$ are
 \begin{gather*}
  \sigma_i((u_i,v_1,\dots,v_k),e)=\sigma((u,u_i,v_1,\dots,v_k),e).
 \end{gather*}
 Then we gain $F_{G,u,\sigma}$ as we take the disjoint union of $F_{G_i,u_i,\sigma_i}$ for $1\le i\le k$ and we connect a new vertex called $u$ with $u_i$ for $1\le i\le k$.
\end{Rem}

By induction we can prove the following properties of a stable-path tree.
\begin{Pro}\label{prop:faszerk}
Let $G$ be a connected graph, $u\in V(G)$, $\sigma$ a deep decision, and let  $F$ be a $\sigma$-DFS tree. Denote by $\overline{F}$ the set of paths from $u$ in $F$ (they are $\sigma$-stable paths). Then
 \begin{enumerate}
  \item there exists a sequence $G_1, \dots, G_k$  of induced subgraphs of $G$, such that
    \[
      \oI(T^\sigma_{G,u},x)=\oI(G,x)\oI(G_1,x)\dots \oI(G_k,x),
    \]
  \item and
    \[
      \oI(G,x)=\frac{\oI(T^\sigma_{G,u})}{\oI(T^\sigma_{G,u}-\overline{F},x)}.
    \]
 \end{enumerate}
\end{Pro}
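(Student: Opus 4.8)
\emph{The plan} is to prove statements (1) and (2) simultaneously by induction on $|V(G)|$, using Theorem~\ref{th:fa_gen} together with the recursive description of $T^\sigma_{G,u}$ that is built into its definition. The base case $|V(G)|=1$ is immediate: $T^\sigma_{G,u}$ is a single vertex, $\overline F=\{\overline u\}$, $T-\overline F$ is empty, and both identities reduce to $\oI(G,x)=1+x$ (with the empty product in (1)).

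For the inductive step I would first record the structure of $T:=T^\sigma_{G,u}$ near its root. Writing $N(u)=\{u_1,\dots,u_d\}$ ordered by the value of $\sigma(\overline u,(u,u_i))$, the root $\overline u$ has children $r^1,\dots,r^d$, where $r^i$ is the root of $T^i:=T^{\sigma_i}_{G^i,u_i}$ and $G^i=G-u-\{u_1,\dots,u_{i-1}\}$, so that $T-\overline u=\bigsqcup_{i=1}^d T^i$. I would then group these subtrees according to the connected components $H_1,\dots,H_m$ of $G-u$. The crucial observation is that if $u_{i_1}\prec\dots\prec u_{i_s}$ are the neighbours of $u$ lying in a fixed component $H_j$, then $u_{i_1}$ is exactly the DFS-root $w_j=\mathrm{argmin}_{v\in V(H_j)\cap N(u)}\sigma(u,(u,v))$; and because every vertex removed before $u_{i_1}$ lies in a \emph{different} component, the component of $u_{i_1}$ in $G^{i_1}$ is all of $H_j$. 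Hence $T^{i_1}=T^{\sigma_j}_{H_j,w_j}$ is precisely the stable-path tree of the whole component $H_j$, and by the construction in the preceding Remark the restriction of $F$ to $H_j$ is the DFS tree $F_{H_j,w_j,\sigma_j}$, to which the induction hypothesis applies.

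Next I would analyse how $\overline F$ meets the subtrees $T^i$. Let $\overline{F_{H_j}}$ denote the set of paths from $w_j$ in $F_{H_j,w_j,\sigma_j}$. Since the only edge of $F$ leaving $u$ towards $H_j$ is $(u,w_j)$, every nontrivial path from $u$ in $F$ that enters $H_j$ begins $(u,w_j,\dots)$ and therefore, viewed as a vertex of $T$, lies in the subtree $T^{i_1}=T^{\sigma_j}_{H_j,w_j}$; inside that subtree these paths form exactly the copy of $\overline{F_{H_j}}$. Consequently $\overline F\setminus\{\overline u\}$ is disjoint from every ``later'' subtree $T^{i_t}$ with $t\ge 2$, and
\[
  T-\overline F=\bigsqcup_{j=1}^m\Bigl(T^{\sigma_j}_{H_j,w_j}-\overline{F_{H_j}}\Bigr)\ \sqcup\ \bigsqcup_{t\ge 2}T^{i_t}.
\]
Applying the inductive form of (2) to each pair $(H_j,w_j)$ gives $\oI\bigl(T^{\sigma_j}_{H_j,w_j}-\overline{F_{H_j}},x\bigr)=\oI(T^{\sigma_j}_{H_j,w_j},x)/\oI(H_j,x)$, so taking the product and using $\oI(G-u,x)=\prod_j\oI(H_j,x)$ and $\oI(T-\overline u,x)=\prod_{i=1}^d\oI(T^i,x)$ yields
\[
  \oI(T-\overline F,x)=\frac{\oI(T-\overline u,x)}{\oI(G-u,x)}.
\]
Theorem~\ref{th:fa_gen} now rewrites the right-hand side as $\oI(T,x)/\oI(G,x)$, which is statement (2). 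Statement (1) then follows from the same displayed factorisation: substituting the inductive form of (1) for each $T^{\sigma_j}_{H_j,w_j}$ (whose factor $\oI(H_j,x)$ cancels the corresponding denominator) and for each $T^{i_t}$ exhibits $\oI(T-\overline F,x)$, and hence $\oI(T,x)=\oI(G,x)\,\oI(T-\overline F,x)$, as a product of independence polynomials of induced subgraphs of $G$.

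I expect \emph{the main obstacle} to be the bookkeeping in the crucial observation of the second paragraph: one must verify carefully that the sequential, one-vertex-at-a-time removal in the definition of $T^\sigma_{G,u}$ is compatible with the component-wise, one-root-per-component description of the DFS tree $F$, namely that the $\sigma$-minimal neighbour in each component of $G-u$ is processed first and that earlier removals never touch that component. Once the identification $T^{i_1}=T^{\sigma_j}_{H_j,w_j}$ and the localisation of $\overline F$ to the ``first'' subtrees are in place, both identities fall out of the induction hypothesis and Theorem~\ref{th:fa_gen}.
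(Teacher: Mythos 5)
Your proposal is correct and takes essentially the same route as the paper's proof: induction on $|V(G)|$ using the root decomposition $T-\overline{u}=\bigsqcup_i T^{\sigma_i}_{G^i,u_i}$ from Theorem~\ref{th:fa_gen}, with the same key observation (the subtree rooted at the $\sigma$-minimal neighbour of $u$ in each component $H_j$ of $G-u$ is the stable-path tree of the \emph{whole} component, so those factors multiply exactly to $I(G-u,x)$ and cancel the denominator). The only difference is organizational: the paper proves part (1) directly by this cancellation and dismisses part (2) with ``goes similarly,'' whereas you prove (2) in detail --- including the $\overline{F}$ bookkeeping via the recursive structure of the DFS tree, which the paper never writes down --- and then read off (1); this is a harmless reordering that, if anything, supplies the details the paper omits.
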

\begin{proof}
 We will prove the first part by induction on the number of vertices of $G$. The proof of the second part goes similarly. From the proof of the previous theorem (and with its notations) we know that
 \begin{gather*}
  \oI(T^\sigma_{G,u},x)=\frac{\oI(G,x)}{\oI(G-u,x)}\oI(T^\sigma_{G,u}-\overline{u},x)=\\
  =\frac{\oI(G,x)}{\oI(G-u,x)}\oI(T^{\sigma_1}_{G-u,u_1},x)\oI(T^{\sigma_2}_{G-\{u,u_1\},u_2},x)\dots \oI(T^{\sigma_d}_{G-\{u,u_1,\dots u_{d-1}\},u_d},x)=\\
  =\frac{\oI(G,x)}{\oI(G-u,x)}\prod_{i=1}^d\prod_{j=0}^{l_i}\oI(G^i_j,x),
 \end{gather*}
 where $G^i_0$ is the connected component of $G-\{u,u_1,\dots,u_{i-1}\}$, which contains $u_i$; and each $G^i_j$ is an induced subgraph of $G^i_0$. So each $G^i_j$ is an induced subgraph of $G$. 
 Let $\{H_1,\dots,H_t\}$ the set of connected components of $G-u$, and 
 \[
  I=\{\min_{u_i\in V(H_j)}(i)~|~1\le j \le t\}.
 \]
 By definition of $I$ we have that the set $\{G^i_0~|~i\in I\}$ is the set of connected components of $G-u$. This implies that the product $\prod_{i\in I}I(G^i_0,x)=I(G-u,x)$, therefore 
 \begin{eqnarray}\label{prec}
      \oI(T^\sigma_{G,u},x)=\oI(G,x)\prod_{i\in I'}\oI(G^i_0,x)\prod_{i=1}^d\prod_{j=1}^{l_i}\oI(G^i_j,x),
 \end{eqnarray}
 where $I'=\{1,\dots,d\}\setminus I$.
\end{proof}

\begin{Rem}
 Sometimes, it is useful to follow the induction to determine explicitly the multiplicites of the subgraphs occuring in the formula (\ref{prec}).
\end{Rem}

\section{Applications of stable-path tree}\label{sec:ap}

In this section we will present various applications of the  following corollary of Proposition~\ref{prop:faszerk}:

\begin{Cor}\label{cor:poly}
 Let $G$ be a graph, $v\in V(G)$, and let $\sigma$ be a deep decision. If $G$ is a claw-free graph, then $I(T^\sigma_{G,u},x)$ is real-rooted. Moreover $I(G,x)$ divides $I(T^{\sigma}_{G,u},x)$.
\end{Cor}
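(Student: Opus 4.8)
The plan is to deduce both assertions directly from Proposition~\ref{prop:faszerk}(1) together with the theorem of Chudnovsky and Seymour~\cite{sey}. First I would reduce the real-rootedness claim to the case that $G$ is connected: a path from $u$ reaches only the connected component $G_1$ of $u$, so $T^\sigma_{G,u}=T^\sigma_{G_1,u}$ (just as in the proof of Theorem~\ref{th:fa_gen}), and $G_1$, being an induced subgraph of the claw-free graph $G$, is again claw-free. With $G$ connected, Proposition~\ref{prop:faszerk}(1) supplies induced subgraphs $G_1,\dots,G_k$ of $G$ such that
\[
  \oI(T^\sigma_{G,u},x)=\oI(G,x)\,\oI(G_1,x)\cdots\oI(G_k,x).
\]

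The next step is to use that claw-freeness is hereditary: a claw is an induced $K_{1,3}$, so a graph without an induced claw has no induced subgraph containing one. Hence $G$ and each $G_j$ are claw-free, and by Chudnovsky--Seymour~\cite{sey} their independence polynomials $\oI(G,x),\oI(G_1,x),\dots,\oI(G_k,x)$ are all real-rooted. The zero multiset of a product of polynomials is the union of the zero multisets of its factors, so the right-hand side above has only real zeros; since these polynomials have nonnegative coefficients no sign subtlety arises, and $\oI(T^\sigma_{G,u},x)$ is real-rooted, which proves the first claim.

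For the divisibility I would read the same identity (for connected $G$) as $\oI(T^\sigma_{G,u},x)=\oI(G,x)\cdot Q(x)$ with $Q(x)=\oI(G_1,x)\cdots\oI(G_k,x)\in\mathbb{R}[x]$, which exhibits $\oI(G,x)$ as a divisor of $\oI(T^\sigma_{G,u},x)$.

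I do not anticipate a genuine obstacle, since all of the substance is carried by Proposition~\ref{prop:faszerk} and by the cited real-rootedness theorem. The two points to watch are the reduction to the connected case, which is needed both so that Proposition~\ref{prop:faszerk} (stated for connected graphs) applies and so that $\oI(G,x)$ literally occurs as a factor for the divisibility statement, and the fact that the subgraphs $G_j$ produced by the $\sigma$-DFS construction are genuinely \emph{induced} subgraphs of $G$. This induced-ness is exactly what transfers claw-freeness from $G$ to every factor, and it is already guaranteed by the conclusion of Proposition~\ref{prop:faszerk}.
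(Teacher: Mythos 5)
Your proposal is correct and takes essentially the same route as the paper: factor $I(T^\sigma_{G,u},x)$ via Proposition~\ref{prop:faszerk}(1), note that the induced subgraphs inherit claw-freeness, apply Chudnovsky--Seymour to each factor, and read off divisibility from the same identity. The only difference is that you make explicit the reduction to the connected case (via $T^\sigma_{G,u}=T^\sigma_{G_1,u}$), which the paper's proof leaves implicit even though Proposition~\ref{prop:faszerk} is stated for connected graphs---a point of care in your favor.
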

\begin{proof}
 Assume that $G$ is a claw-free graph. Then by Proposition~\ref{prop:faszerk} we have a sequence of induced subgraphs $G_1,\dots,G_k$ of $G$, such that 
 \[
  I(T^\sigma_{G,u})=I(G,x)\prod_{i=1}^kI(G_i,x).
 \]
 Since each $G_i$ is an induced subgraph of a claw-free graph, therefore it is also claw-free. Then by the result of Chudnovsky and Seymour, Thm. 1.1. of \cite{sey}, we have that each polynomial $I(G_i,x)$ and the polynomial $I(G,x)$ are real-rooted, so their product is also real rooted.
\end{proof}

In this section will show some applications of this corollary. 
In all applications, of Corollary~\ref{cor:poly} the vertices of $G$ will be labelled by integers. This labeling will induce a total order on the vertices in the most natural way, the order of two vertices will be the order of their labels.  
%
%
%
%
%

\subsection{Trees with real-rooted independence polynomial}
In this subsection we will show that some families of trees have real-rooted independence polynomials.
\begin{Def}
 Let us recall that,
 the \textsl{$n$-centipede $W_n$} is a graph such that we take a path on $n$ vertices and we hang 1 pendant edge from each vertex of it.
 
 The \textsl{$n$-caterpillar} $H_n$ is a graph such that we take a path on $n$ vertices and we hang 2 pendant edges from each vertex of it.
 
 The \textsl{Fibonacci tree} $F_0=K_1$ and $F_1=K_2$ with roots $r_0\in V(F_0)$ and $r_1\in V(F_1)$. Then for $n\ge 2$ the $n$th Fibonacci tree $F_n$ is obtained from  the disjoint union of $F_{n-1}$, $F_{n-2}$ and a new vertex, labeled as $r_n$, and connecting $r_n$ to the roots of $F_{n-1}$ and $F_{n-2}$. Define $r_n$ as the root of $F_n$.
\end{Def}

The proof of the real-rootedness of the independence polynomial of $W_n$ was in \cite{Zhu2007}, then a unified proof for $W_n$ and $H_n$ appeared in \cite{Wang2011}. The statement for $F_n$ was verified in \cite{Galvin2017} for $n\le 22$, and conjectured for arbitrary $n$. 
Our proofs will follow the following strategy: for each mentioned $T$ tree we will define a  claw-free graph $\widetilde{G}$ with integer labels, such that the stable-path tree of $\widetilde{G}$ from one of its vertex will be isomorphic to $T$.

\begin{Pro}\label{pro:centipede}
For any $n$, the independence polynomial of $W_n$ is real-rooted, hence log-concave and  unimodal.
\end{Pro}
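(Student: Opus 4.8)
The plan is to realize the centipede as the stable-path tree of a suitable claw-free graph and then read off real-rootedness from Corollary~\ref{cor:poly}. Concretely, I would produce a claw-free graph $\widetilde{G}=\widetilde{G}_n$ equipped with an integer labelling (hence a total order $\prec$) and a root vertex $u$ such that $T^{<}_{\widetilde{G},u}\cong W_n$. Granting this, the conclusion is immediate: by Corollary~\ref{cor:poly} the independence polynomial of the stable-path tree of a claw-free graph is real-rooted, and since $I(T^{<}_{\widetilde{G},u},x)=I(W_n,x)$ we get that $I(W_n,x)$ is real-rooted. The two remaining implications are the standard ones and require no extra idea: a real-rooted polynomial with nonnegative coefficients has log-concave coefficient sequence (Newton's inequalities), and a positive log-concave sequence is unimodal. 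Thus the entire proposition reduces to the combinatorial construction of $\widetilde{G}_n$.

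For the construction I would argue by induction on $n$, following the recursive shape of the centipede. Rooted at its first spine vertex, $W_n$ is a root carrying one pendant leaf together with one subtree isomorphic to $W_{n-1}$ (rooted at the second spine vertex), so by the recursive description in Theorem~\ref{th:fa} it suffices to build $\widetilde{G}_n$ with $\deg(u)=2$ whose two neighbours $u_1\prec u_2$ satisfy: $T^{<}_{\widetilde{G}_n-u,\,u_1}\cong W_{n-1}$ and $T^{<}_{\widetilde{G}_n-\{u,u_1\},\,u_2}$ is a single vertex. The base case is the clean fact that the triangle $K_3$, ordered $1\prec 2\prec 3$ and rooted at $1$, has stable-path tree $P_4=W_2$: the stable paths are $(1),(1,2),(1,3),(1,2,3)$, while $(1,3,2)$ is killed by the stability inequality, and these assemble into the path $(1,3)-(1)-(1,2)-(1,2,3)$. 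The inductive step should glue a new root together with a small claw-free gadget onto the previously constructed $\widetilde{G}_{n-1}$, arranged so that from the new root one neighbour feeds into the old $W_{n-1}$ and the other is a length-one stable path that dead-ends, supplying the new pendant leaf.

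The heart of the matter, and the step I expect to be the main obstacle, is arranging both the gadget and the labelling so that the set of stable paths is \emph{exactly} $V(W_n)$ and not one vertex larger. Two competing constraints must be reconciled. First, the vertex meant to produce the new dead-end leaf must genuinely vanish after the relevant deletions: in the naive attempts (a bare pendant, or a triangle apex shared by two spine vertices) that vertex stays reachable along several stable paths or survives as an extra pendant in $\widetilde{G}_n-u$, so the tree sprouts spurious vertices and degenerates into a long caterpillar instead of the bushy centipede. Controlling this means choosing the labels so that at each spine vertex the stability condition $\sigma(P',(v_i,v_{i+1}))<\sigma(P',(v_i,v_j))$ prunes precisely the unwanted continuations --- intuitively, the edge heading deeper into $W_{n-1}$ must always be cheaper than the chord back to an already-used neighbour. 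Second, attaching the new root raises the degree of the old root, so I must verify that the gadget keeps $\widetilde{G}_n$ claw-free exactly there; this is what forces the gadget to carry an extra edge (making the enlarged neighbourhood of the old root cover an edge) rather than being a bare attachment.

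Once the gadget and ordering are fixed to meet these two constraints, I would close the induction mechanically: invoking Theorem~\ref{th:fa}, write $T^{<}_{\widetilde{G}_n,u}$ as the root $\overline{u}$ joined to $T^{<}_{\widetilde{G}_n-u,\,u_1}$ and $T^{<}_{\widetilde{G}_n-\{u,u_1\},\,u_2}$, identify the first with $W_{n-1}$ by the inductive hypothesis and the second with a single vertex, and conclude $T^{<}_{\widetilde{G}_n,u}\cong W_n$. Verifying the two displayed isomorphisms --- that is, checking that the induced labelling on the deleted graphs is exactly the one the induction needs, and that no stable path escapes the intended pattern --- is the only place where genuine case-checking is unavoidable, and it is where I would concentrate the care.
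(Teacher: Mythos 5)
Your overall strategy is exactly the paper's---exhibit a claw-free graph with an integer labelling whose stable-path tree is $W_n$, then invoke Corollary~\ref{cor:poly} (plus Newton's inequalities for the ``hence'')---and your base case $T^<_{K_3,1}\cong P_4=W_2$ is correct and agrees with the paper, whose $\widetilde{W}_2$ is precisely $K_3$. But what you have written is a plan, not a proof: the entire mathematical content of the proposition is the explicit gadget, and you leave it open (``once the gadget and ordering are fixed \dots I would close the induction mechanically''). Worse, you explicitly discard the gadget that works. The paper's graph $\widetilde{W}_n$ is exactly ``a triangle apex shared by two spine vertices'': take the path on $\{1,\dots,n\}$, attach a triangle apex to every \emph{other} edge $(1,2),(3,4),\dots$, give all apexes labels larger than $n$, and for odd $n$ attach one pendant edge to the last vertex. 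The feature you flag as fatal---that the apex on edge $(2k+1,2k+2)$ is reachable along two stable paths, and survives as a pendant on $2k+2$ inside $\widetilde{W}_n$ minus the earlier spine---is not a bug but the whole point: that single apex supplies the pendant leaf of \emph{both} spine vertices $2k+1$ and $2k+2$, which is exactly why each spine vertex of the resulting tree carries one leaf rather than two. What does produce the caterpillar is an apex on \emph{every} edge (that is the paper's $\widetilde{H}_n$), or, equivalently, running your induction one spine vertex at a time and gluing a fresh triangle onto the old root at each step, since then the old root acquires two leaf-producing branches.

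That last point is the structural reason your induction, as set up, cannot close: with the correct gadget, $\widetilde{W}_n-1$ rooted at $2$ is \emph{not} $\widetilde{W}_{n-1}$ (the old apex has become a bare pendant on $2$), so the hypothesis ``$T^<_{\widetilde{G}_n-u,u_1}\cong W_{n-1}$ arises as the tree of $\widetilde{G}_{n-1}$'' is simply unavailable, and your two named repairs (bare pendant, apex on the old root) fail claw-freeness and tree-exactness respectively---obstructions that are artifacts of the one-vertex-at-a-time framework, not of the problem. The repair is to advance two spine vertices per step: build $\widetilde{W}_n$ from $\widetilde{W}_{n-2}$ by prepending spine vertices $1,2$ and one apex $a$ adjacent to both, with $1\prec 2\prec\cdots\prec n\prec a$. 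Then from $1$ the branch through $a$ dies immediately ($2\prec a$ forces $2$ to be deleted first, leaving $a$ isolated); the branch through $2$ splits into a dead-end branch through $a$ and a branch through $3$ rooted in $\widetilde{W}_{n-2}$ plus an isolated vertex, which is $W_{n-2}$ by induction; assembling via Theorem~\ref{th:fa} gives $T^<_{\widetilde{W}_n,1}\cong W_n$. Claw-freeness is immediate because every vertex of degree $3$ has a triangle edge inside its neighbourhood, and Corollary~\ref{cor:poly} finishes. So the gap is concrete: you identified the right constraints but rejected their simple solution, and without that construction the proposition is not proved.
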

\begin{proof}
  Let $\widetilde{W}_n$ be a graph (Fig. \ref{fig:tilde_Wn}), such that we take a path on $\{1,\dots,n\}$ and we attach a triangle to every $(2k+1)$th edge of the path. If $n$ is odd, then we attach a pendant edge to $n$. Also label all the new vertices by numbers bigger than $n$. 
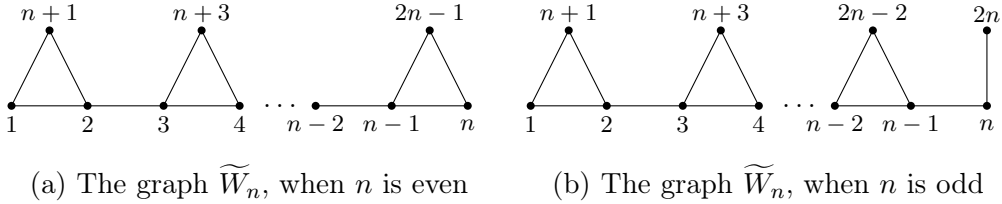
\begin{figure}[h]
\centering
\begin{subfigure}[b]{0.5\linewidth}
\begin{tikzpicture}[line cap=round,line join=round,>=triangle 45,x=1.0cm,y=1.0cm]
\clip(0.71,0.44) rectangle (7.63,2.4);
\draw (1,1)-- (2,1);
\draw (2,1)-- (3,1);
\draw (5,1)-- (6,1);
\draw (4.5,1) node {$$ \dots $$};
\draw (3,1)-- (4,1);
\draw (1,1)-- (1.5,2);
\draw (2,1)-- (1.5,2);
\draw (3.5,2)-- (3,1);
\draw (4,1)-- (3.5,2);
\draw (6,1)-- (7,1);
\draw (6.5,2)-- (7,1);
\draw (6.5,2)-- (6,1);
\begin{scriptsize}
\fill  (1,1) circle (1.5pt);
\draw (1,0.75) node {$1$};
\fill  (2,1) circle (1.5pt);
\draw (2,0.75) node {$2$};
\fill  (3,1) circle (1.5pt);
\draw (3,0.75) node {$3$};
\fill  (5,1) circle (1.5pt);
\draw (5,0.75) node {$n-2$};
\fill  (6,1) circle (1.5pt);
\draw (6,0.75) node {$n-1$};
\fill  (4,1) circle (1.5pt);
\draw (4,0.75) node {$4$};
\fill  (1.5,2) circle (1.5pt);
\draw (1.5,2.22) node {$n+1$};
\fill  (3.5,2) circle (1.5pt);
\draw (3.5,2.22) node {$n+3$};
\fill  (7,1) circle (1.5pt);
\draw (7,0.75) node {$n$};
\fill  (6.5,2) circle (1.5pt);
\draw (6.5,2.22) node {$2n-1$};
\end{scriptsize}
\end{tikzpicture}
\caption{The graph $\widetilde{W}_n$, when $n$ is even}
\end{subfigure}%
\begin{subfigure}[b]{0.5\linewidth}
\begin{tikzpicture}[line cap=round,line join=round,>=triangle 45,x=1.0cm,y=1.0cm]
\clip(0.71,0.44) rectangle (7.63,2.4);
\draw (1,1)-- (2,1);
\draw (2,1)-- (3,1);
\draw (5,1)-- (6,1);
\draw (4.5,1) node {$$ \dots $$};
\draw (3,1)-- (4,1);
\draw (1,1)-- (1.5,2);
\draw (2,1)-- (1.5,2);
\draw (3.5,2)-- (3,1);
\draw (4,1)-- (3.5,2);
\draw (6,1)-- (7,1);
\draw (5.5,2)-- (6,1);
\draw (5,1)-- (5.5,2);
\draw (7,1)-- (7,2);
\begin{scriptsize}
\fill  (1,1) circle (1.5pt);
\draw (1,0.75) node {$1$};
\fill  (2,1) circle (1.5pt);
\draw (2,0.75) node {$2$};
\fill  (3,1) circle (1.5pt);
\draw (3,0.75) node {$3$};
\fill  (5,1) circle (1.5pt);
\draw (5,0.75) node {$n-2$};
\fill  (6,1) circle (1.5pt);
\draw (6,0.75) node {$n-1$};
\fill  (4,1) circle (1.5pt);
\draw (4,0.75) node {$4$};
\fill  (1.5,2) circle (1.5pt);
\draw (1.5,2.22) node {$n+1$};
\fill  (3.5,2) circle (1.5pt);
\draw (3.5,2.22) node {$n+3$};
\fill  (7,1) circle (1.5pt);
\draw (7,0.75) node {$n$};
\fill  (5.5,2) circle (1.5pt);
\draw (5.5,2.22) node {$2n-2$};
\fill  (7,2) circle (1.5pt);
\draw (7,2.22) node {$2n$};
\end{scriptsize}
\end{tikzpicture}
\caption{The graph $\widetilde{W}_n$, when $n$ is odd} 
\end{subfigure}
\caption{The graph family $\widetilde{W}_n$}
\label{fig:tilde_Wn}
\end{figure}

  These graphs are claw-free, and 
  \begin{gather*}
   T^<_{\widetilde{W}_n,1}\cong W_n.
  \end{gather*}
 Therefore by Corollary~\ref{cor:poly} we have the desired statement.
\end{proof}

\begin{Pro}\label{pro:caterpillar}
For any $n$, the independence polynomial of $H_n$ are real-rooted, hence log-concave and  unimodal.
\end{Pro}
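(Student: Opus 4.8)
The plan is to mirror the strategy of Proposition~\ref{pro:centipede}: I will build a claw-free graph $\widetilde{H}_n$ carrying integer labels so that its stable-path tree from a suitable root is isomorphic to $H_n$, and then invoke Corollary~\ref{cor:poly}. Concretely, I would take a path $v_1,\dots,v_n$ labelled $1,\dots,n$ and, to every edge $(v_i,v_{i+1})$, attach a triangle by adding one new vertex $a_i$ adjacent to both $v_i$ and $v_{i+1}$; finally I would hang a single pendant vertex $\ell_1$ from $v_1$ and $\ell_n$ from $v_n$. All the added vertices $a_i,\ell_1,\ell_n$ receive labels larger than $n$ (for $n=1$ one takes the degenerate version, a star on $v_1$ with two pendant leaves). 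The motivation is exactly the mechanism behind the centipede: a triangle vertex $a_i$ sitting on two consecutive spine vertices contributes one pendant leaf to each of $v_i$ and $v_{i+1}$ in the stable-path tree, so an internal spine vertex (which lies on two triangles) collects two leaves, while the two endpoints need one additional honest pendant each. Note that using a $K_4$ instead of a triangle fails, since it produces a pendant path rather than two separate leaves.

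First I would check that $\widetilde{H}_n$ is claw-free. The only vertices of degree $\ge 3$ are the spine vertices; an internal $v_i$ has neighbourhood $\{v_{i-1},v_{i+1},a_{i-1},a_i\}$, in which the two vertex-disjoint triangle edges $v_{i-1}a_{i-1}$ and $v_{i+1}a_i$ make it impossible to select three pairwise non-adjacent vertices, and the endpoints $v_1$ (neighbourhood $\{v_2,a_1,\ell_1\}$) and $v_n$ are handled the same way using the edges $v_2 a_1$ and $v_{n-1}a_{n-1}$. Hence no vertex is the centre of a claw.

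The heart of the argument is the isomorphism $T^{<}_{\widetilde{H}_n,v_1}\cong H_n$, which I would prove by induction on $n$ using the recursive description of the stable-path tree underlying Theorem~\ref{th:fa}. Rooting at $v_1$ one has $N(v_1)=\{v_2,a_1,\ell_1\}$ with $v_2$ carrying the smallest label; processing the two larger neighbours, $a_1$ becomes isolated in $\widetilde{H}_n-\{v_1,v_2\}$ and $\ell_1$ becomes isolated after $v_1$ and the earlier neighbours are deleted, so each contributes a pendant leaf at the root. In the residual graph $\widetilde{H}_n-\{v_1\}$ the vertex $\ell_1$ is isolated, hence invisible from $v_2$, while $a_1$ survives as a pendant on $v_2$; thus the component of $v_2$ is isomorphic to $\widetilde{H}_{n-1}$ on $v_2,\dots,v_n$, with $a_1$ now playing the role of the new left-end pendant. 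By the inductive hypothesis its stable-path tree is $H_{n-1}$, and attaching the root $v_1$ together with its two leaves reproduces exactly $H_n$, since deleting the spine-end and its two leaves from $H_n$ leaves $H_{n-1}$. With the isomorphism and claw-freeness in hand, Corollary~\ref{cor:poly} gives that $I(H_n,x)=I(T^{<}_{\widetilde{H}_n,v_1},x)$ is real-rooted, whence log-concave and unimodal.

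I expect the main obstacle to be the bookkeeping in the inductive step: one must verify that at each spine vertex the path-continuation neighbour is ordered before the two leaf-producing neighbours (guaranteed by giving every added vertex a label above $n$), that the triangle vertex of the previous edge degenerates to a pendant precisely when its two spine-neighbours have been removed, and that the spurious isolated vertices created along the way (such as $\ell_1$ in $\widetilde{H}_n-\{v_1\}$) do not appear in the tree because they are unreachable from the current root. By contrast, the claw-freeness verification is a routine local computation, and the passage from real-rootedness to log-concavity and unimodality is immediate.
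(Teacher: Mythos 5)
Your proposal is correct and is essentially the paper's own proof: your $\widetilde{H}_n$ (spine with a triangle on every edge plus one pendant at each end) is isomorphic to the paper's auxiliary graph, which is described as a path on $\{0,\dots,n+1\}$ with triangles on all internal edges, and both arguments conclude via claw-freeness and Corollary~\ref{cor:poly}. The only (immaterial) difference is the root: the paper roots the stable-path tree at the pendant vertex $0$, while you root at the adjacent spine vertex $v_1$; either choice yields a tree isomorphic to $H_n$.
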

\begin{proof}
  Let $\widetilde{H}_n$ be a graph (Fig.~\ref{fig:tilde_Hn}), such that we take a path on $\{0,\dots,n+1\}$ and we attach a triangle to each edge, which is not the first or the last. Also label all the new vertices with numbers bigger than $n$.
  
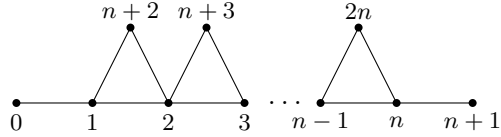
\begin{figure}[h!]
\begin{tikzpicture}[line cap=round,line join=round,>=triangle 45,x=1.0cm,y=1.0cm]
\clip(0.71,0.44) rectangle (7.63,2.4);
\draw (1,1)-- (2,1);
\draw (2,1)-- (3,1);
\draw (5,1)-- (6,1);
\draw (4.5,1) node {$$ \dots $$};
\draw (3,1)-- (4,1);
\draw (6,1)-- (7,1);
\draw (2.5,2)-- (2,1);
\draw (3,1)-- (2.5,2);
\draw (4,1)-- (3.5,2);
\draw (5,1)-- (5.5,2);
\draw (3.5,2)-- (3,1);
\draw (6,1)-- (5.5,2);
\begin{scriptsize}
\fill  (1,1) circle (1.5pt);
\draw (1,0.75) node {$0$};
\fill  (2,1) circle (1.5pt);
\draw (2,0.75) node {$1$};
\fill  (3,1) circle (1.5pt);
\draw (3,0.75) node {$2$};
\fill  (5,1) circle (1.5pt);
\draw (5,0.75) node {$n-1$};
\fill  (6,1) circle (1.5pt);
\draw (6,0.75) node {$n$};
\fill  (4,1) circle (1.5pt);
\draw (4,0.75) node {$3$};
\fill  (7,1) circle (1.5pt);
\draw (7,0.75) node {$n+1$};
\fill  (2.5,2) circle (1.5pt);
\draw (2.5,2.22) node {$n+2$};
\fill  (3.5,2) circle (1.5pt);
\draw (3.5,2.22) node {$n+3$};
\fill  (5.5,2) circle (1.5pt);
\draw (5.5,2.22) node {$2n$};
\end{scriptsize}
\end{tikzpicture}

 \caption{The graph $\widetilde{H}_n$}
 \label{fig:tilde_Hn}
\end{figure}

  These graphs are claw-free, and 
  \begin{gather*}
   T^<_{\widetilde{H}_n,0}\cong H_n.
  \end{gather*}
 Therefore by Corollary~\ref{cor:poly} we have the desired statement.
\end{proof}

\begin{Pro}
For any $n$, the independence polynomial of $F_n$ are real-rooted, hence log-concave and  unimodal.
\end{Pro}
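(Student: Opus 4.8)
The plan is to reuse the strategy that worked for $W_n$ and $H_n$: exhibit a claw-free graph $\widetilde{F}_n$ carrying an integer labelling whose stable-path tree from a distinguished vertex is isomorphic to $F_n$, and then quote Corollary~\ref{cor:poly}. The graph I would take is the square of a path, $\widetilde{F}_n=P_{n+1}^2$: its vertex set is $\{v_0,v_1,\dots,v_n\}$ and $v_i\sim v_j$ exactly when $1\le |i-j|\le 2$ (the path edges together with the ``skip-one'' edges), rooted at the endpoint $v_n$. I would label the vertices so that the induced total order \emph{decreases} with the index, i.e.\ $v_n\prec v_{n-1}\prec\dots\prec v_0$ (for instance giving $v_i$ the label $n-i$); the purpose of this choice is that at each vertex the stable-path search is forced to step first to the neighbour of nearest smaller index.

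First I would check claw-freeness: the neighbours of $v_k$ have indices in $\{k-2,k-1,k+1,k+2\}\subset[k-2,k+2]$, and any three points of an interval whose endpoints differ by $4$ contain two at distance at most $2$; hence $v_k$ has no three pairwise non-adjacent neighbours and $\widetilde{F}_n$ is claw-free. The heart of the argument is the isomorphism $T^<_{\widetilde{F}_n,v_n}\cong F_n$, which I would prove by induction on $n$. Writing $Q_m=P_{m+1}^2$ for the same construction on $\{v_0,\dots,v_m\}$ (so $Q_0=K_1=F_0$ and $Q_1=K_2=F_1$), the two structural facts I need are $Q_n-v_n=Q_{n-1}$ and $Q_n-\{v_n,v_{n-1}\}=Q_{n-2}$, where in each case the surviving top vertex, $v_{n-1}$ respectively $v_{n-2}$, is again the endpoint. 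In $T^<_{\widetilde{F}_n,v_n}$ the root $\overline{v_n}$ has exactly the two children $v_{n-1},v_{n-2}$ (the only neighbours of $v_n$ in $Q_n$), and since $v_{n-1}\prec v_{n-2}$ the first child spawns $T^<_{Q_n-v_n,\,v_{n-1}}=T^<_{Q_{n-1},v_{n-1}}\cong F_{n-1}$ and the second spawns $T^<_{Q_n-\{v_n,v_{n-1}\},\,v_{n-2}}=T^<_{Q_{n-2},v_{n-2}}\cong F_{n-2}$, both by the induction hypothesis (the induced order on each subgraph is again reverse-index, so it is the order used for $\widetilde{F}_{n-1}$ and $\widetilde{F}_{n-2}$). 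Attaching these two subtrees to a common root reproduces verbatim the recursive definition of $F_n$, so $T^<_{\widetilde{F}_n,v_n}\cong F_n$.

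The main obstacle, and the reason the ordering must be chosen with care, is exactly that the two root-neighbours have to be processed ``nearest index first'': if the search descended to $v_{n-2}$ before $v_{n-1}$, then in $Q_n-v_n$ the vertex $v_{n-2}$ is no longer an endpoint (it has degree three there), the recursion would fail to close, and the subtree produced would not be a Fibonacci tree. With the reverse-index order this never occurs, and one may double-check consistency at the level of ratios via $\tfrac{I(Q_{n-1},x)}{I(Q_n,x)}=\bigl(1+x\,\tfrac{I(Q_{n-2}-v_{n-2},x)}{I(Q_{n-2},x)}\cdot\tfrac{I(Q_{n-1}-v_{n-1},x)}{I(Q_{n-1},x)}\bigr)^{-1}$, coming from $I(Q_n,x)=I(Q_{n-1},x)+x\,I(Q_{n-3},x)$; this is the very recursion satisfied by $\tfrac{I(F_n-r_n,x)}{I(F_n,x)}$ and it agrees at $n=0,1$. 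Once the isomorphism is in hand, claw-freeness of $\widetilde{F}_n$ together with Corollary~\ref{cor:poly} gives at once that $I(F_n,x)=I(T^<_{\widetilde{F}_n,v_n},x)$ is real-rooted, hence log-concave and unimodal, proving the conjecture of Galvin and Hilyard.
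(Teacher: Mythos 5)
Your proof is correct and is essentially the paper's own argument: the paper likewise takes the square of a path ($i\sim j$ iff $0<|i-j|\le 2$), roots the stable-path tree at an endpoint with the ``nearest index first'' order (the mirror image of your reverse-index order at $v_n$), checks claw-freeness, and applies Corollary~\ref{cor:poly}. If anything, your indexing on $n+1$ vertices is the more careful version, since the paper's graph on $\{0,\dots,n-1\}$ actually yields $F_{n-1}$ rather than $F_n$ --- an off-by-one that does not affect the conclusion.
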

\begin{proof}
  Let $\widetilde{F}_n$ be a graph (Fig.~\ref{fig:tilde_Fn}), such that we take the set $\{0,\dots,n-1\}$ and we connect $i$ and $j$ if $0<|i-j|\le 2$.
  
\begin{figure}[h]
\begin{tikzpicture}[line cap=round,line join=round,>=triangle 45,x=1.0cm,y=1.0cm]
\clip(0.71,0.44) rectangle (7.63,2.4);
\draw (1,1)-- (2,1);
\draw (2,1)-- (3,1);
\draw (5,1)-- (6,1);
\draw (4.5,1) node {$$ \dots $$};
\draw (3,1)-- (4,1);
\draw (6,1)-- (7,1);
\draw [shift={(2,1)}] plot[domain=0:pi,variable=\t]({1*1*cos(\t r)+0*1*sin(\t r)},{0*1*cos(\t r)+1*1*sin(\t r)});
\draw [shift={(3,1)}] plot[domain=0:pi,variable=\t]({1*1*cos(\t r)+0*1*sin(\t r)},{0*1*cos(\t r)+1*1*sin(\t r)});
\draw [shift={(6,1)}] plot[domain=0:pi,variable=\t]({1*1*cos(\t r)+0*1*sin(\t r)},{0*1*cos(\t r)+1*1*sin(\t r)});
\draw [shift={(4,1)}] plot[domain=1.57:pi,variable=\t]({1*1*cos(\t r)+0*1*sin(\t r)},{0*1*cos(\t r)+1*1*sin(\t r)});
\draw [shift={(5,1)}] plot[domain=0:1.57,variable=\t]({1*1*cos(\t r)+0*1*sin(\t r)},{0*1*cos(\t r)+1*1*sin(\t r)});
\begin{scriptsize}
\fill  (1,1) circle (1.5pt);
\draw (1,0.75) node {$0$};
\fill  (2,1) circle (1.5pt);
\draw (2,0.75) node {$1$};
\fill  (3,1) circle (1.5pt);
\draw (3,0.75) node {$2$};
\fill  (5,1) circle (1.5pt);
\draw (5,0.75) node {$n-3$};
\fill  (6,1) circle (1.5pt);
\draw (6,0.75) node {$n-2$};
\fill  (4,1) circle (1.5pt);
\draw (4,0.75) node {$3$};
\fill  (7,1) circle (1.5pt);
\draw (7,0.75) node {$n-1$};
\end{scriptsize}
\end{tikzpicture}

 \caption{The graph $\widetilde{F}_n$}
 \label{fig:tilde_Fn}
\end{figure}
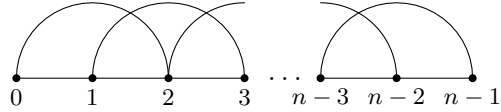

  These graphs are claw-free, and 
  \begin{gather*}
   T^<_{\widetilde{F}_n,0}\cong F_n.
  \end{gather*}
 Therefore by Corollary~\ref{cor:poly} we have the desired statement.
\end{proof}

%
%

\begin{Rem}
 If someone carefully examine the formula (\ref{prec}), then one might get the following identities:
 \begin{gather*}
  I(W_{n},x)=I(\widetilde{W}_{n})(1+x)^{\lfloor n/2\rfloor},\\
  I(H_n,x)=I(\widetilde{H}_n)(1+x)^{n-2},\\
  I(F_n,x)=\prod_{k=0}^n I(\widetilde{F}_{k},x)^{f_{n-k}},  
 \end{gather*}
where $f_{0}=1$, $f_{1}=0$ and $f_{n}=f_{n-1}+f_{n-2}$ for $n\ge 1$.
\end{Rem}

\subsection{Some real-rooted graph families}
In this subsection we show another approach to verify real-rootedness of independence polynomials of some graphs. The idea is that for a graph $G$ we construct a  stable-path tree $T$, which is real-rooted. Then  by Corollary~\ref{cor:poly} we know that $I(G,x)$ divides $I(T,x)$, so it means that $I(G,x)$ is also real-rooted.
\begin{Def}
  Let us define the following graph families.
  
    The \textit{$n$th apple graph} $A_n$ is a graph (Fig.~\ref{fig:An}), such that we take a path on $\{1,\dots,n\}$, and we add the edge $(2,n)$.

    The \textit{$n$-sunlet} graph $N_n$ is a graph (Fig.~\ref{fig:Nn}), such that we take a cycle on $\{1,\dots, n\}$, and we attach a new vertex to each vertex of the cycle. Also label all the new vertices with numbers bigger than $n$. 

   Let $M_n$ be a graph (Fig.~\ref{fig:Mn}), such that we take a path on $\{1,\dots,n\}$, and we attach 2 triangles to any $2k+1$th edge of the path. If $n$ is odd, then we attach 2 pendant edges to $n$. For the new vertices choose different numbers greater than $n$ as labels.
\end{Def}

\begin{figure}[h!]
\begin{subfigure}[b]{0.5\linewidth}
\begin{tikzpicture}[line cap=round,line join=round,>=triangle 45,x=1.0cm,y=1.0cm]
\clip(0.71,0.44) rectangle (7.63,2.4);
\draw (1,1)-- (2,1);
\draw (2,1)-- (3,1);
\draw (5,1)-- (6,1);
\draw (4.5,1) node {$$ \dots $$};
\draw (3,1)-- (4,1);
\draw (6,1)-- (7,1);
\draw [shift={(4.5,-3)}] plot[domain=1.01:2.13,variable=\t]({1*4.72*cos(\t r)+0*4.72*sin(\t r)},{0*4.72*cos(\t r)+1*4.72*sin(\t r)});
\begin{scriptsize}
\fill  (1,1) circle (1.5pt);
\draw (1,0.75) node {$1$};
\fill  (2,1) circle (1.5pt);
\draw (2,0.75) node {$2$};
\fill  (3,1) circle (1.5pt);
\draw (3,0.75) node {$3$};
\fill  (5,1) circle (1.5pt);
\draw (5,0.75) node {$n-2$};
\fill  (6,1) circle (1.5pt);
\draw (6,0.75) node {$n-1$};
\fill  (4,1) circle (1.5pt);
\draw (4,0.75) node {$4$};
\fill  (7,1) circle (1.5pt);
\draw (7,0.75) node {$n$};
\end{scriptsize}
\end{tikzpicture}

 \caption{The apple graph ($A_n$)}
 \label{fig:An}
\end{subfigure}%
\begin{subfigure}[b]{0.5\linewidth}
\begin{tikzpicture}[line cap=round,line join=round,>=triangle 45,x=1.0cm,y=1.0cm]
\clip(0.64,-0.17) rectangle (6.63,2.41);
\draw (1,1)-- (1,2);
\draw (1,1)-- (2,1);
\draw (2,1)-- (3,1);
\draw (2,1)-- (2,2);
\draw (3,1)-- (3,2);
\draw (5,1)-- (6,1);
\draw (6,1)-- (6,2);
\draw (5,1)-- (5,2);
\draw (4,1) node {$$ \dots $$};
\draw [shift={(3.5,5)}] plot[domain=4.15:5.27,variable=\t]({1*4.72*cos(\t r)+0*4.72*sin(\t r)},{0*4.72*cos(\t r)+1*4.72*sin(\t r)});
\begin{scriptsize}
\fill  (1,1) circle (1.5pt);
\fill  (2,1) circle (1.5pt);
\fill  (3,1) circle (1.5pt);
\fill  (5,1) circle (1.5pt);
\fill  (6,1) circle (1.5pt);
\fill  (1,2) circle (1.5pt);
\fill  (2,2) circle (1.5pt);
\fill  (3,2) circle (1.5pt);
\fill  (5,2) circle (1.5pt);
\fill  (6,2) circle (1.5pt);
\end{scriptsize}
\end{tikzpicture}

 \caption{The sunlet graph ($N_n$)}
 \label{fig:Nn}
\end{subfigure}
\caption{Some graph families}
\end{figure}
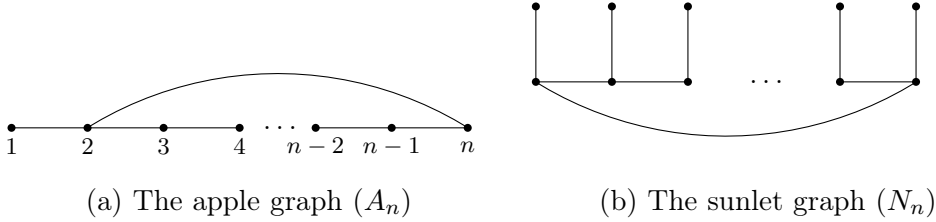

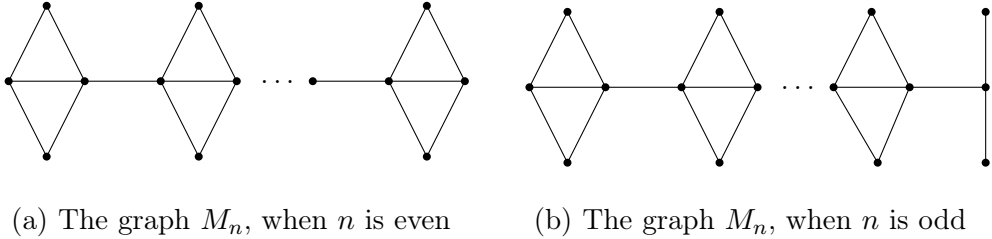
\begin{figure}[h]
\begin{subfigure}[b]{0.5\linewidth}
\begin{tikzpicture}[line cap=round,line join=round,>=triangle 45,x=1.0cm,y=1.0cm]
\clip(0.51,-0.43) rectangle (7.5,2.26);
\draw (1,1)-- (2,1);
\draw (2,1)-- (3,1);
\draw (5,1)-- (6,1);
\draw (4.5,1) node {$$ \dots $$};
\draw (3,1)-- (4,1);
\draw (1,1)-- (1.5,2);
\draw (2,1)-- (1.5,2);
\draw (3.5,2)-- (3,1);
\draw (4,1)-- (3.5,2);
\draw (6,1)-- (7,1);
\draw (6.5,2)-- (7,1);
\draw (6.5,2)-- (6,1);
\draw (1.5,0)-- (1,1);
\draw (1.5,0)-- (2,1);
\draw (3,1)-- (3.5,0);
\draw (3.5,0)-- (4,1);
\draw (6,1)-- (6.5,0);
\draw (6.5,0)-- (7,1);
\begin{scriptsize}
\fill  (1,1) circle (1.5pt);
\fill  (2,1) circle (1.5pt);
\fill  (3,1) circle (1.5pt);
\fill  (5,1) circle (1.5pt);
\fill  (6,1) circle (1.5pt);
\fill  (4,1) circle (1.5pt);
\fill  (1.5,2) circle (1.5pt);
\fill  (3.5,2) circle (1.5pt);
\fill  (7,1) circle (1.5pt);
\fill  (6.5,2) circle (1.5pt);
\fill  (1.5,0) circle (1.5pt);
\fill  (3.5,0) circle (1.5pt);
\fill  (6.5,0) circle (1.5pt);
\end{scriptsize}
\end{tikzpicture}

 \caption{The graph $M_n$, when $n$ is even}
\end{subfigure}%
\begin{subfigure}[b]{0.5\linewidth}
\begin{tikzpicture}[line cap=round,line join=round,>=triangle 45,x=1.0cm,y=1.0cm]
\clip(0.49,-0.35) rectangle (7.64,2.3);
\draw (1,1)-- (2,1);
\draw (2,1)-- (3,1);
\draw (5,1)-- (6,1);
\draw (4.5,1) node {$$ \dots $$};
\draw (3,1)-- (4,1);
\draw (1,1)-- (1.5,2);
\draw (2,1)-- (1.5,2);
\draw (3.5,2)-- (3,1);
\draw (4,1)-- (3.5,2);
\draw (6,1)-- (7,1);
\draw (5.5,2)-- (6,1);
\draw (5,1)-- (5.5,2);
\draw (7,1)-- (7,2);
\draw (1.5,0)-- (1,1);
\draw (2,1)-- (1.5,0);
\draw (3.5,0)-- (3,1);
\draw (4,1)-- (3.5,0);
\draw (5.58,0)-- (5,1);
\draw (5.58,0)-- (6,1);
\draw (7,0)-- (7,1);
\begin{scriptsize}
\fill  (1,1) circle (1.5pt);
\fill  (2,1) circle (1.5pt);
\fill  (3,1) circle (1.5pt);
\fill  (5,1) circle (1.5pt);
\fill  (6,1) circle (1.5pt);
\fill  (4,1) circle (1.5pt);
\fill  (1.5,2) circle (1.5pt);
\fill  (3.5,2) circle (1.5pt);
\fill  (7,1) circle (1.5pt);
\fill  (5.5,2) circle (1.5pt);
\fill  (7,2) circle (1.5pt);
\fill  (1.5,0) circle (1.5pt);
\fill  (3.5,0) circle (1.5pt);
\fill  (5.58,0) circle (1.5pt);
\fill  (7,0) circle (1.5pt);
\end{scriptsize}
\end{tikzpicture}

 \caption{The graph $M_n$, when $n$ is odd}
\end{subfigure}
\caption{The graph family $M_n$}
\label{fig:Mn}
\end{figure}

A proof for real-rootedness of the independence polynomial of $M_n$ and $N_n$ was given in \cite{Wang2011}. 

\begin{Pro}
For any $n$, the independence polynomial of $M_n$ is real-rooted, hence log-concave and  unimodal.
\end{Pro}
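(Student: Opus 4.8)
The plan is to follow the reverse strategy announced at the opening of this subsection: instead of realizing the target object as a stable-path tree of a claw-free graph, I take $G=M_n$ itself as the base graph, form its stable-path tree $T=T^<_{M_n,1}$ (with the ordering induced by the labeling), and exploit the divisibility relation $I(M_n,x)\mid I(T,x)$ coming from Proposition~\ref{prop:faszerk}. Note that $M_n$ is \emph{not} claw-free — the two triangle-apices attached to an odd edge, together with the following path vertex, form an independent set of size three in the neighbourhood of their common path vertex — so Corollary~\ref{cor:poly} cannot be applied to $M_n$ directly. The point is instead to identify $T$ with a tree already known to be real-rooted.

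The heart of the argument is the structural claim
\[
  T^<_{M_n,1}\cong H_n,
\]
i.e. the stable-path tree of $M_n$ from vertex $1$ is exactly the $n$-caterpillar. This is the precise analogue of the isomorphism $T^<_{\widetilde W_n,1}\cong W_n$ used in Proposition~\ref{pro:centipede}: replacing the single apex on each odd edge of $\widetilde W_n$ by a pair of apices turns each pendant of the centipede $W_n$ into a pair of pendants, and hence turns $W_n$ into $H_n$. I would establish the isomorphism by induction on $n$, peeling off the first triangle-pair via the recursive definition of the stable-path tree: the neighbours of $1$ are the vertex $2$ and the two apices, and unfolding the three subtrees $T^1,T^2,T^3$ attached at $\bar 1$ (as in the base case $M_2$, where one checks directly that the two central vertices $\bar 1,\bar 2$ each receive two pendant leaves, so that $T^<_{M_2,1}\cong H_2$) exhibits a caterpillar spine whose remaining part is the stable-path tree of the smaller graph obtained by deleting the first triangle block. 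Checking that the induced orderings and the bare even-edge segments match so that the spine has exactly $n$ vertices, each carrying two leaves, is the main bookkeeping obstacle.

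Granting the isomorphism, the conclusion is immediate. By Proposition~\ref{pro:caterpillar} the polynomial $I(H_n,x)=I(T,x)$ is real-rooted, while by Proposition~\ref{prop:faszerk}(1) we may write $I(T,x)=I(M_n,x)\prod_i I(G_i,x)$, so $I(M_n,x)$ is a factor in $\mathbb{R}[x]$ of a polynomial all of whose complex zeros are real. Since the monic linear factors of a real-rooted polynomial over $\mathbb{C}$ are real and factorization in $\mathbb{C}[x]$ is unique, every zero of the divisor $I(M_n,x)$ occurs among the real zeros of $I(T,x)$; hence $I(M_n,x)$ is real-rooted, and therefore log-concave and unimodal. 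The only genuinely non-routine step is the structural identification $T^<_{M_n,1}\cong H_n$; everything else is the same divisibility-plus-real-factor mechanism described at the start of this subsection.
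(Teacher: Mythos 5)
Your proof is correct and follows essentially the same route as the paper: the key structural isomorphism $T^<_{M_n,1}\cong H_{n}$, divisibility of $I(T^<_{M_n,1},x)$ by $I(M_n,x)$, and real-rootedness of $I(H_n,x)$ via Proposition~\ref{pro:caterpillar}. Your one deviation is actually a more careful citation than the paper's own: you invoke Proposition~\ref{prop:faszerk} for the divisibility, correctly observing that $M_n$ is not claw-free, whereas the paper cites Corollary~\ref{cor:poly}, whose statement is phrased under a claw-freeness hypothesis (though its divisibility clause indeed holds for arbitrary graphs).
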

\begin{proof}
 By Proposition ~\ref{pro:caterpillar} we have that $H_n$ has real-rooted independence polynomial. However we can see that
 \[
  T^<_{M_n,1}\cong H_{n}.
 \]
 By Corollary~\ref{cor:poly} we know that $I(M_n,x)$ divides $I(H_{n},x)$, which implies, that $I(M_n,x)$ is real-rooted polynomial.
\end{proof}

\begin{Pro}
For any $n$, the independence polynomial of $N_n$ is real-rooted, hence log-concave and  unimodal.
\end{Pro}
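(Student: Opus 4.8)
The plan is to mirror the argument used for $M_n$: I would construct the stable-path tree of $N_n$ itself, recognise it as a centipede (already known to be real-rooted by Proposition~\ref{pro:centipede}), and then invoke the divisibility furnished by Proposition~\ref{prop:faszerk}. Concretely, I would label the cycle of $N_n$ by $1,\dots,n$ in cyclic order and give every pendant vertex a label exceeding $n$, so that the neighbours of the cycle vertex $1$ are $2$, $n$, and its own pendant (the pendant being largest). I would then unfold $T=T^<_{N_n,1}$ straight from the definition, branching at the root $\overline{1}$ according to these three neighbours, namely $u_1=2$, $u_2=n$, and $u_3$ the pendant.

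The crux is to identify the three subtrees hanging off $\overline{1}$. Deleting the root breaks the cycle: the component of $2$ in $N_n-1$ is the path $2,3,\dots,n$ carrying one pendant on each vertex, i.e.\ the centipede $W_{n-1}$; likewise the component of $n$ in $N_n-\{1,2\}$ is the centipede $W_{n-2}$ on the path $3,\dots,n$; and the pendant of $1$ becomes isolated once $1$ is deleted, so the third branch is a single vertex. Here I would use the elementary observation that the stable-path tree of a \emph{tree} rooted at any vertex is isomorphic to that tree: a tree has no chords, so every path from the root is automatically stable, and such paths correspond bijectively to the vertices under inclusion. Hence the first two branches are genuine copies of $W_{n-1}$ and $W_{n-2}$, rooted at the spine-endpoints $2$ and $n$ respectively.

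Assembling the pieces, the root $\overline{1}$ carries its own pendant (the isolated third branch) while joining the spine-endpoint of $W_{n-1}$ to the spine-endpoint of $W_{n-2}$. The two spines therefore concatenate through $\overline{1}$ into a single path on $(n-1)+1+(n-2)=2n-2$ vertices, and every vertex of this path carries exactly one pendant; this gives the isomorphism $T^<_{N_n,1}\cong W_{2n-2}$. I expect this identification — tracking how the deletions of $1$ and then $2$ convert the sunlet into centipedes, and verifying the resulting spine length — to be the only real work; I would confirm it on $n=3,4$ and, if a fully rigorous version is desired, promote it to a short induction on $n$.

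Finally, by Proposition~\ref{pro:centipede} the polynomial $I(W_{2n-2},x)$ is real-rooted, and the factorization $I(T^<_{N_n,1},x)=I(N_n,x)\prod_i I(G_i,x)$ of Proposition~\ref{prop:faszerk} (which needs no claw-free hypothesis) shows that $I(N_n,x)$ divides $I(W_{2n-2},x)$. Since the multiset of complex zeros of a product is the union of the zeros of its factors, every zero of the divisor $I(N_n,x)$ is a zero of $I(W_{2n-2},x)$ and is therefore real. Hence $I(N_n,x)$ is real-rooted, and consequently log-concave and unimodal.
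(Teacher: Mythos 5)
Your proof is correct and follows essentially the same route as the paper: unfold $T^<_{N_n,1}$ into two centipede branches plus a pendant at the root, recognise the result as a centipede, and combine Proposition~\ref{pro:centipede} with the divisibility coming from the stable-path-tree factorization. One point of divergence worth recording: the paper asserts $T^<_{N_n,1}\cong W_{2n-1}$, whereas you obtain $W_{2n-2}$ --- and you are the one who is right, since the spines of the two branches ($W_{n-1}$ rooted at $2$ and $W_{n-2}$ rooted at $n$) concatenate through $\overline{1}$ into a spine of length $(n-1)+1+(n-2)=2n-2$; a check at $n=3$ confirms this, as $N_3$ has exactly $8$ stable paths from vertex $1$, giving $W_4$, not $W_5$ (the paper's claim is an off-by-one slip, harmless to its conclusion since every centipede is real-rooted). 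You are also slightly more careful than the paper on the citation: $N_n$ is not claw-free for $n\ge 4$, so the divisibility must indeed be drawn from Proposition~\ref{prop:faszerk} (which needs no claw-free hypothesis), as you do, rather than from the claw-free statement of Corollary~\ref{cor:poly} that the paper nominally invokes.
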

\begin{proof}
 By Proposition ~\ref{pro:centipede} we have that $W_n$ has real-rooted independence polynomial. However we can see that
 \[
  T^<_{N_n,1}\cong W_{2n-1}.
 \]
 By Corollary~\ref{cor:poly} we know that $I(N_n,x)$ divides $I(W_{2n-1},x)$, which implies, that $I(N_n,x)$ is real-rooted polynomial.
\end{proof}

\begin{Pro}
For any $n\ge 4$, the independence polynomial of $A_n$ is real-rooted, hence log-concave and  unimodal.
\end{Pro}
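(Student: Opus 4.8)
The plan is to reduce real-rootedness of $I(A_n,x)$ to a three-term recurrence identical to the one satisfied by the path polynomials, and then to run the classical interlacing induction. As context for why a direct appeal to the method of this section is not enough, I first note that the stable-path tree $T^<_{A_n,1}$ is a spider with legs of lengths $1$, $n-2$, $n-3$, so Proposition~\ref{prop:faszerk} only yields the factorization $I(T^<_{A_n,1},x)=I(P_{n-3},x)\,I(A_n,x)$; since $A_n$ is not claw-free for $n\ge 5$ (the vertices $1,3,n$ form a claw at $2$) and that spider is not itself the stable-path tree of a claw-free graph, this identity does not by itself give real-rootedness, and a separate argument is required.

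First I would apply the deletion identities (the Lemma of Section~\ref{sec:main}) at the pendant vertex $1$. Since $A_n-1$ is the cycle $C_{n-1}$ on $\{2,\dots,n\}$ and $A_n-N[1]=A_n-\{1,2\}$ is the path $P_{n-2}$ on $\{3,\dots,n\}$, I get $I(A_n,x)=I(C_{n-1},x)+x\,I(P_{n-2},x)$. Expanding the cycle by the same Lemma at one of its vertices gives $I(C_{n-1},x)=I(P_{n-2},x)+x\,I(P_{n-4},x)$, and hence
\[
 I(A_n,x)=(1+x)\,I(P_{n-2},x)+x\,I(P_{n-4},x).
\]
Because $I(P_m,x)=I(P_{m-1},x)+x\,I(P_{m-2},x)$ and $I(A_n,x)$ is a fixed linear combination, with the very same coefficient polynomials $1+x$ and $x$, of path polynomials at shifted indices, a one-line substitution shows that the apple polynomials inherit the identical recurrence,
\[
 I(A_n,x)=I(A_{n-1},x)+x\,I(A_{n-2},x)\qquad(n\ge 6).
\]

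With this recurrence the argument becomes the standard Sturm-sequence induction. For the base I would check directly that $I(A_4,x)=1+4x+2x^2$ and $I(A_5,x)=(1+x)(1+4x+x^2)$ are real-rooted and that the two roots of $I(A_4,x)$ interlace the three roots of $I(A_5,x)$; the graph $A_4$ is moreover a triangle with a pendant edge, hence claw-free, giving real-rootedness here independently. For the inductive step, assuming $I(A_{n-2},x)$ and $I(A_{n-1},x)$ are real-rooted with the roots of $I(A_{n-2},x)$ interlacing those of $I(A_{n-1},x)$, the identity $I(A_n,x)=I(A_{n-1},x)+x\,I(A_{n-2},x)$ forces, via the sign of $x\,I(A_{n-2},x)$ at the (negative) roots of $I(A_{n-1},x)$ together with the values at $0$ and at $-\infty$, a real root of $I(A_n,x)$ in each gap, so that $I(A_n,x)$ is real-rooted and $I(A_{n-1},x)$ interlaces it. Real-rootedness for all $n\ge 4$, and therefore log-concavity and unimodality, then follow.

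The hard part will be the bookkeeping in the inductive step: because the degree of $I(A_n,x)$ grows by one only on every other step, both the equal-degree and the degree-plus-one cases of the interlacing lemma occur, and one must fix the sign conventions (all roots being negative reals, $I(A_n,0)=1$) so that the alternation is correctly preserved in each case and so that the strictness of the interlacing is maintained through the step where the degree does not increase.
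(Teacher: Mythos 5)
Your proof is correct, but it takes a genuinely different route from the paper, and one of your motivating side claims is false. The paper stays entirely inside its stable-path tree framework: it defines the claw-free graph $\widetilde{A}_n$ obtained from the path on $\{1,\dots,n\}$ by adding the chord $(2,4)$ (the only vertices of degree at least $3$ then lie on the triangle $2$--$3$--$4$, so there is no claw), observes that $T^<_{\widetilde{A}_n,1}\cong T^<_{A_n,1}$ --- both are exactly the spider with legs of lengths $1$, $n-2$, $n-3$ that you describe --- and concludes in two lines: $I(T^<_{A_n,1},x)$ is real-rooted by Corollary~\ref{cor:poly} applied to the claw-free graph $\widetilde{A}_n$, while $I(A_n,x)$ divides $I(T^<_{A_n,1},x)$ by the divisibility part of the same corollary applied to $A_n$ itself, so $I(A_n,x)$ is real-rooted. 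Consequently, your assertion that this spider ``is not itself the stable-path tree of a claw-free graph,'' and that therefore a separate argument is forced, is precisely wrong: exhibiting such a claw-free graph is the paper's entire proof. (Your other preliminary remarks are fine: $A_n$ does have a claw at $2$ for $n\ge 5$, and the factorization $I(T^<_{A_n,1},x)=I(P_{n-3},x)\,I(A_n,x)$ is correct.)

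Your actual argument, however, is sound and self-contained. The identities $I(A_n,x)=I(C_{n-1},x)+xI(P_{n-2},x)$ and $I(C_{n-1},x)=I(P_{n-2},x)+xI(P_{n-4},x)$ give $I(A_n,x)=(1+x)I(P_{n-2},x)+xI(P_{n-4},x)$, and the path recurrence then yields $I(A_n,x)=I(A_{n-1},x)+xI(A_{n-2},x)$ for $n\ge 6$; the base cases $I(A_4,x)=1+4x+2x^2$ and $I(A_5,x)=(1+x)(1+4x+x^2)$ are computed correctly and do interlace. The step you defer as ``bookkeeping'' --- that if $f_{n-2}$ is real-rooted and interlaces the real-rooted $f_{n-1}$, with all roots negative and constant terms $1$, then $f_n=f_{n-1}+xf_{n-2}$ is real-rooted and is interlaced by $f_{n-1}$, in both the equal-degree and degree-plus-one configurations --- is the standard lemma behind Zhu's and Wang--Zhu's proofs for $W_n$ and $H_n$, so it does go through, though a complete write-up should state and prove it rather than gesture at it. The trade-off: your route is elementary and avoids the Chudnovsky--Seymour theorem entirely, but it is exactly the kind of ad hoc recurrence-plus-Sturm-sequence argument that the paper's method is designed to unify and replace; the paper's route requires no recurrence and no interlacing at all, at the cost of invoking the claw-free machinery.
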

\begin{proof}
 Let $\widetilde{A}_n$ be a graph (Fig.~\ref{fig:tilde_An}), such that we take a path on $\{1,\dots,n\}$, and add the edge $(2,4)$.
 \begin{figure}[h!]
 \begin{tikzpicture}[line cap=round,line join=round,>=triangle 45,x=1.0cm,y=1.0cm]
\clip(0.54,0.25) rectangle (8.6,2.7);
\draw (1,1.5)-- (2,1.5);
\draw (2,1.5)-- (3,2);
\draw (3,2)-- (3,1);
\draw (3,1)-- (2,1.5);
\draw (3,1)-- (4,1);
\draw (4,1)-- (5,1);
\draw (7,1)-- (8,1);
\draw (6,1) node {$$ \dots $$};
\begin{scriptsize}
\fill  (1,1.5) circle (1.5pt);
\draw (1,1.25) node {$1$};
\fill  (2,1.5) circle (1.5pt);
\draw (2,1.25) node {$2$};
\fill  (3,1) circle (1.5pt);
\draw (3,0.75) node {$4$};
\fill  (3,2) circle (1.5pt);
\draw (3,2.22) node {$3$};
\fill  (4,1) circle (1.5pt);
\draw (4,0.75) node {$5$};
\fill  (5,1) circle (1.5pt);
\draw (5,0.75) node {$6$};
\fill  (7,1) circle (1.5pt);
\draw (7,0.75) node {$n-1$};
\fill  (8,1) circle (1.5pt);
\draw (8,0.75) node {$n$};
\end{scriptsize}
\end{tikzpicture}

  \caption{The graph $\widetilde{A}_n$}
  \label{fig:tilde_An}
 \end{figure}
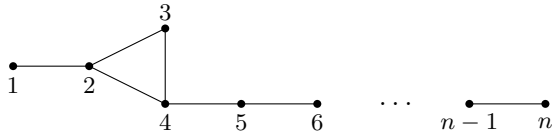

 Since $\widetilde{A}_n$ is a claw-free graph, so for any $n\ge 4$ we have that $T_{\widetilde{A}_n,1}$ has a real-rooted independence polynomial. However we can see that
 \[
  T^<_{\widetilde{A}_n,1}\cong T^<_{A_n,1},
 \]
 which means that $I(T^<_{A_n,1},x)$ is real-rooted. By Corollary~\ref{cor:poly} we know that $I(A_n,x)$ divides $I(T^<_{A_n,1},x)$, which implies, that $I(A_n,x)$ is real-rooted polynomial.
%
\end{proof}

\section{Final remarks}

We would like to  remark, that this method can be also capable of proving the real-rootedness of the independence polynomial of the ladder graph (Thm. 5.1. of \cite{Zhu2015a}), the polyphenyl ortho-chain ($\bar{O}_n$ of \cite{alikhani2011}), $k$-ary analogue of the Fibonacci tree (Remark of \cite{Wagner2007}). 

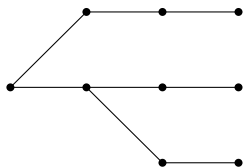
\begin{figure}
  \begin{tikzpicture}[line cap=round,line join=round,>=triangle 45,x=1.0cm,y=1.0cm]
  \clip(0.73,0.72) rectangle (4.4,3.3);
  \draw (1,2)-- (2,2);
  \draw (2,2)-- (3,2);
  \draw (3,2)-- (4,2);
  \draw (1,2)-- (2,3);
  \draw (2,3)-- (3,3);
  \draw (3,3)-- (4,3);
  \draw (2,2)-- (3,1);
  \draw (3,1)-- (4,1);
  \begin{scriptsize}
  \fill  (1,2) circle (1.5pt);
  \fill  (2,2) circle (1.5pt);
  \fill  (3,2) circle (1.5pt);
  \fill  (4,2) circle (1.5pt);
  \fill  (2,3) circle (1.5pt);
  \fill  (3,3) circle (1.5pt);
  \fill  (4,3) circle (1.5pt);
  \fill  (3,1) circle (1.5pt);
  \fill  (4,1) circle (1.5pt);
  \end{scriptsize}
  \end{tikzpicture}
  \caption{A tree $T$ with real-rooted independence polynomial, which is not a stable-path tree of any non-tree graph}
  \label{ellentree}
\end{figure}
One might ask that  it is true that any tree with real-rooted independence polynomial is a stable path tree of a non-tree graph $G$. The answer is no, as the following example shows:

Let $T$ be a tree on 9 vertices as on the Figure~\ref{ellentree} and assume that there exists a graph $G$, a deep decision $\sigma$ and a vertex $u\in V(G)$, such that $T=T^\sigma_{G,u}$. Then the independence polynomial of $T$ is 
\begin{gather*}
  I(T,x)=(1+3x+x^2)(1+5x+6x^2+x^3)+x(1+2x)^3=\\
  (1+x)(1+8x+20x^2+16x^3+x^4),
\end{gather*}
where the factors are real-rooted and irreducible polynomials in $\mathbb{Q}[x]$. By Proposition~\ref{prop:faszerk} we have that $I(G,x)$ divides $I(T,x)$, and clearly $G$ cannot be $K_1$ or the empty graph, therefore $I(G,x)$ should be $1+8x+20x^2+16x^3+x^4$. However it can be proved, that there is no such a graph $G$.
%
%

  
\bibliography{hivatkozat}
\bibliographystyle{plain}

\end{document}